\colorlet{shadecolor}{blue!15}
\newtheorem{theorem}{Theorem}[section]
\newtheorem{corollary}[theorem]{Corollary}
\newtheorem{lemma}[theorem]{Lemma}
\newtheorem{proposition}[theorem]{Proposition}
\newtheorem{remark}[theorem]{Remark}
\numberwithin{equation}{section}
\newcommand{\calA}{\mathcal{A}}
\newcommand{\calC}{\mathcal{C}}
\newcommand{\calO}{\mathcal{O}}
\newcommand{\calV}{\mathcal{V}}
\newcommand{\calX}{\mathcal{X}}
\newcommand{\bbP}{\mathbb{P}}
\newcommand{\bbR}{\mathbb{R}}
\newcommand{\bbZ}{\mathbb{Z}}
\newcommand{\R}{\mathbb{R}}
\renewcommand{\P}{\mathbb{P}}
\newcommand{\OO}{\mathcal{O}}
\newcommand{\V}{\mathcal{V}}
\newcommand{\B}{\mathcal{B}}
\newcommand{\w}{\mathrm{w}}
\newcommand{\cross}{\mathrm{cross}}
\newcommand{\PivChain}{{\sf PivCh}}
\newcommand*{\email}[1]{E-mail: \href{mailto:#1}{#1} } 
\begin{document}

 \title{Widths of crossings in Poisson Boolean percolation}
\author{Ioan Manolescu\footnote{Université de Fribourg, Switzerland.
    \email{ioan.manolescu@unifr.ch}}
  \qquad 
  Leonardo V.\ Santoro\footnote{EPFL,
    Switzerland. \email{leonardo.santoro@epfl.ch}}
    }

\maketitle

\begin{center}\small
\textbf{Abstract}

\bigskip

\begin{minipage}{.9\textwidth}

We answer the following question: 
if the occupied (or vacant) set of a planar Poisson Boolean percolation model does contain a crossing of an $n\times n$ square, 
how wide is this crossing? 
The answer depends on the whether we consider the critical, sub- or super-critical regime, and is different for the occupied and vacant sets. 

\bigskip
\footnotesize
\noindent \textbf{MSC2020 classes:} 60K35; 60G55\\
\textbf{Key words:} Poisson Boolean percolation; width of crossing; sharp transition; scaling relations
\end{minipage}
\end{center}


%


\section{Introduction}

Percolation is the branch of probability theory that investigates the geometry and connectivity properties of random media. Since its foundation in the 1950s  to model the diffusion of liquid in porous media \cite{Broadbent1957},  percolation theory has attracted great research interest and lead to significant discoveries, especially in two dimensions: see for instance,  Kesten’s determination of the critical threshold \cite{Kesten1980} and of the scaling relations \cite{Kesten1987},  Schramm’s introduction of the Schramm-Loewner evolution \cite{Lawler2008} and Smirnov’s proof of Cardy’s formula \cite{Smirnov2001}.  For an introduction and overview of percolation theory,  we recommend~\cite{Hammersley1984},~\cite{Grimmett1999} and~\cite{Bollobas2006}, among many other texts.

Bernoulli percolation on a symmetric grid lies at the foundation of all percolation models,  and embodies the archetypal setting to investigate  phase transitions and other phenomena emanating from statistical physics.  In the site-percolation version of this model,  each node of the grid is independently chosen to be black with probability $p$ and white with probability $1 - p$, and a random graph is obtained by removing the white vertices. 
It is well known that, as the parameter $p$ increases, the model undergoes a sharp phase transition at some critical parameter. 
At the point of phase transition, percolation is expected to exhibit a universal and conformally invariant scaling limit; unfortunately this was only proved in the particular case of site-percolation on the triangular lattice \cite{Smirnov2001}. 

Boolean (or \textit{continuum}) percolation first appeared in \cite{Gilbert1961} as an early mathematical model for wireless networks. In recent years, it has been studied in order to determine the theoretical bounds of information capacity and performance in such networks \cite{Dousse2006}.  See also \cite{Franceschetti2008} for a wider perspective on random networks.  In addition to this setting, continuum percolation has gained applications in other disciplines, including biology, geology, and physics, such as the study of porous materials and semiconductors. 
From a mathematical point of view, continuum percolation is particularly interesting as it is expected to exhibit the same features as discrete percolation, but enjoys additional symmetries, such as invariance under rotations. 
We direct the reader to~\cite{Meester1996} and the reference therein for more background.

\subsection{Framework}

Let $\eta$ be a Poisson point process on $\R^2$ with intensity $\lambda \cdot {\rm Leb}_{\mathbb R^2}$, where $\lambda > 0$ is a parameter of the model. 
Around each point in the support of $\eta$,  draw a disc of random radius, 
sampled independently for each point according to a fixed probability measure $\mu$ on $\R_+$. 
The set $\mathcal{O} = \OO_\eta \subset \R^2$ of points which are covered by at least one of the above discs is called the \textit{occupied set}, 
while its complement $\mathcal{V} = \calV_\eta := \R^2 \backslash \mathcal{O}$ is referred to as the \textit{vacant set}.
Write $\mathbb P_\lambda$ for the measure governing $\eta$ and the random sets $\mathcal O$ and $\mathcal V$. 

\smallskip

\begin{figure}[h!]
	\begin{center}
	\includegraphics[width=0.75\textwidth]{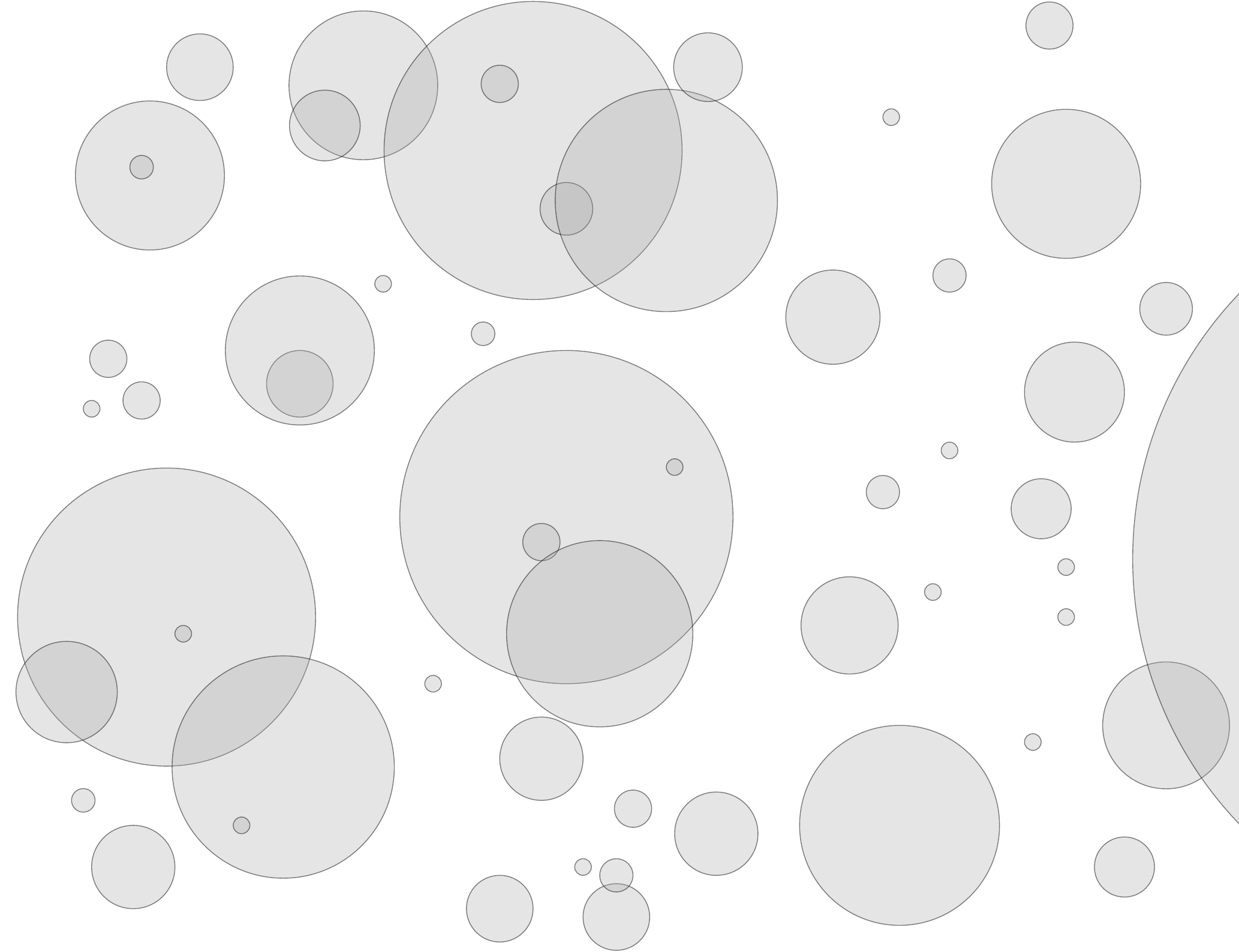}
	\caption{Continum percolation on $\R^2$.}
	\end{center}
\end{figure}

While sharing many features with site or bond Bernoulli percolation, the continuum model poses significant additional challenges: apart from being continuous rather than discrete, these come from its asymmetrical nature (the ``open" and ``closed" set have different properties) and potential long-range dependencies.  Nevertheless, similarly as for the classical Bernoulli case, the Boolean model undergoes a sharp phase transition as $\lambda$ increases.
Indeed,  set:
$$
\lambda_c := \sup\left\lbrace\lambda \geq  0 \::\: \P_\lambda\left(\,0 \overset{\mathcal{O}}{\leftrightsquigarrow} \infty \right) = 0 \right\rbrace,
 $$
 where $\,0 \overset{\mathcal{O}}{\leftrightsquigarrow} \infty $ is the event that the origin lies in an unbounded connected component of $\mathcal O$.  
Under \textit{minimal conditions}, it was recently shown by \cite{Ahlberg2018b} that $0<\lambda_c < \infty$ and:
\begin{itemize}
	\item[(i)] For $\lambda < \lambda_c$ (\textit{sub-critical phase})  the vacant set has a unique unbounded connected component,
 	and the probability of observing an occupied path from $0$ to distance $n$ decays exponentially fast in $n$.
	\item[(ii)] For $\lambda = \lambda_c$ (\textit{critical phase})
 	no unbounded connected component exists in either the occupied or the vacant set.
	Moreover the probability of observing either a vacant or occupied path from $0$ to distance $n$ decays polynomially fast in $n$.
	\item[(iii)] For $\lambda > \lambda_c$ (\textit{super-critical phase})  the occupied set has a unique unbounded component 
	and the probability of observing a vacant path from $0$ to distance $n$ decays exponentially fast in $n$.
\end{itemize}

%

\subsection{Results}

For simplicity, we limit our study to the particular setting where the radii are all equal to $1$ (i.e. when $\mu$ is the Dirac measure at $1$). 
The proof extends readily to the case where $\mu$ is supported on a compact of $(0,+\infty)$, 
and with additional work may include situations where $\mu$ has sufficiently fast decay towards $0$ and $\infty$. 
We do not investigate the optimal conditions for which the results remain valid. 

A \textit{crossing} of the square $[-n,n]^2$ is  a path contained in $[-n,n]^2$, 
connecting its left and right boundaries, namely $\{-n\}\times [-n,n]$ and $\{n\}\times [-n,n]$, respectively.  
A crossing is said to be occupied (resp. vacant) if it is entirely contained in $\mathcal O$ (resp. $\mathcal V$). 
In the following, we write $\cross(n)$ and $\cross^*(n)$  for the events that there exists 
an occupied and vacant horizontal crossing of $[-n,n]^2$, respectively.

The \textit{width} $w(\gamma)$ of an occupied crossing $\gamma$ is \textit{twice} the radius of the largest ball that can be transported along $\gamma$ without intersecting the vacant set. Alternatively, it may be viewed as twice the distance between $\gamma$ and $\mathcal V$:
\begin{equation}
	 2 \cdot\sup\{r \geq  0 \::\: B(\gamma(t),r) \subset \mathcal O \; \forall\; t\in [0,1]\} =  2\cdot\mathrm{dist}(\gamma, \V),
\end{equation}
where $\gamma$ is parameterized by $[0,1]$, and $B(x,r)$ denotes the euclidian ball of radius $r$ centered at $x \in \bbR^2$. 
A similar definition may be given for a vacant crossing, with the roles of $\mathcal O$ and $\mathcal V$ reversed. 

Define the \textit{maximal} occupied and vacant widths as:
\begin{equation}\label{eq:def:width}
\w_n := 2\cdot \sup\limits_{\gamma}  \mathrm{dist}(\gamma, \V) \qquad \text{ and } \qquad
\w^*_n := 2\cdot \sup\limits_{\gamma}  \mathrm{dist}(\gamma, \OO),
\end{equation}
where both supremums are taken over all horizontal crossings of $[-n,n]^2$.
Observe that when no occupied or vacant crossing exists, we have $\w_n = 0$ and $\w_n^* = 0$, respectively.

\begin{figure}[h!]
\centering
\subfloat[]{ \includegraphics[height=.22\textheight]{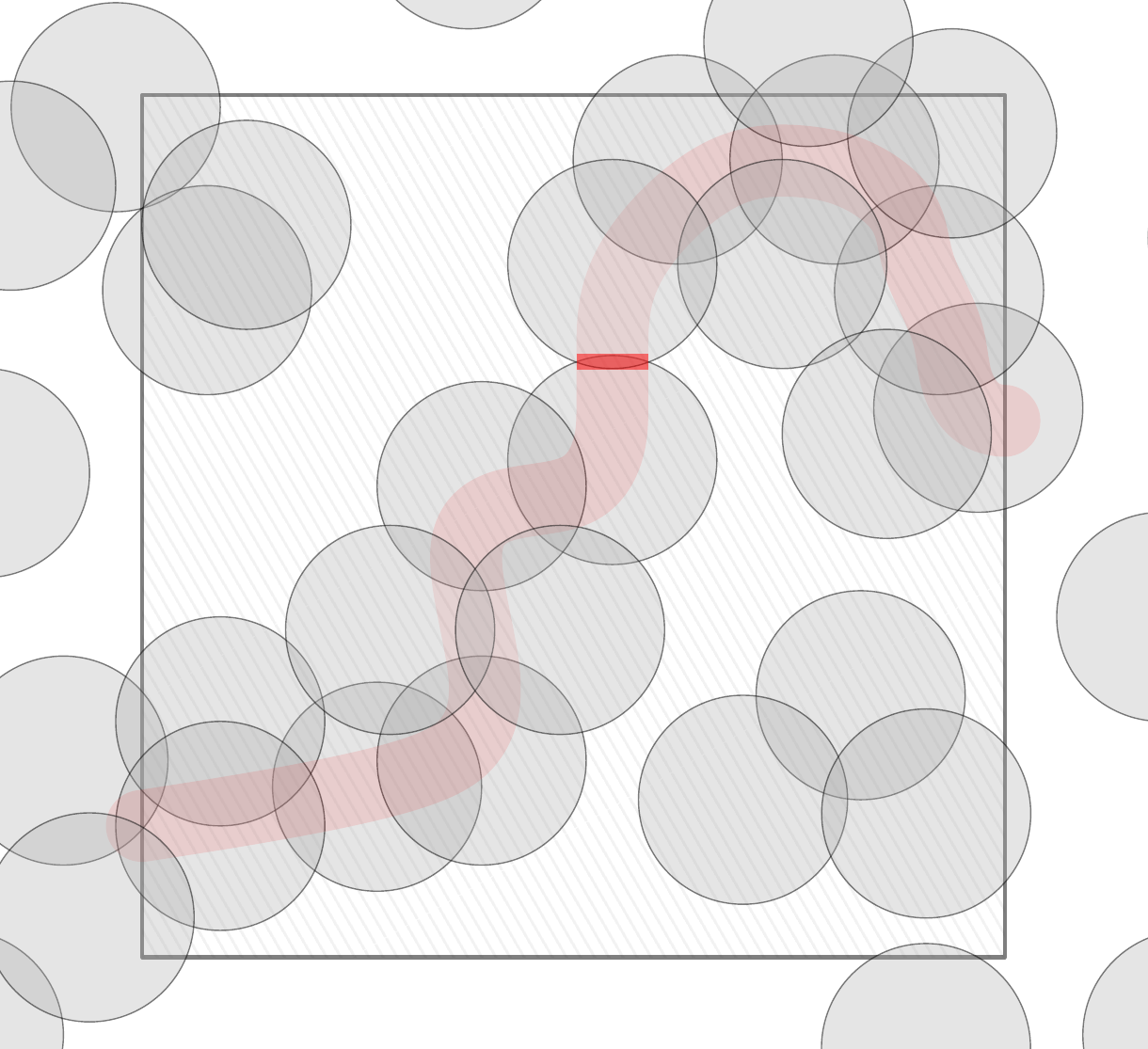}}
\hspace{1cm}
\subfloat[]{ \includegraphics[height=.22\textheight]{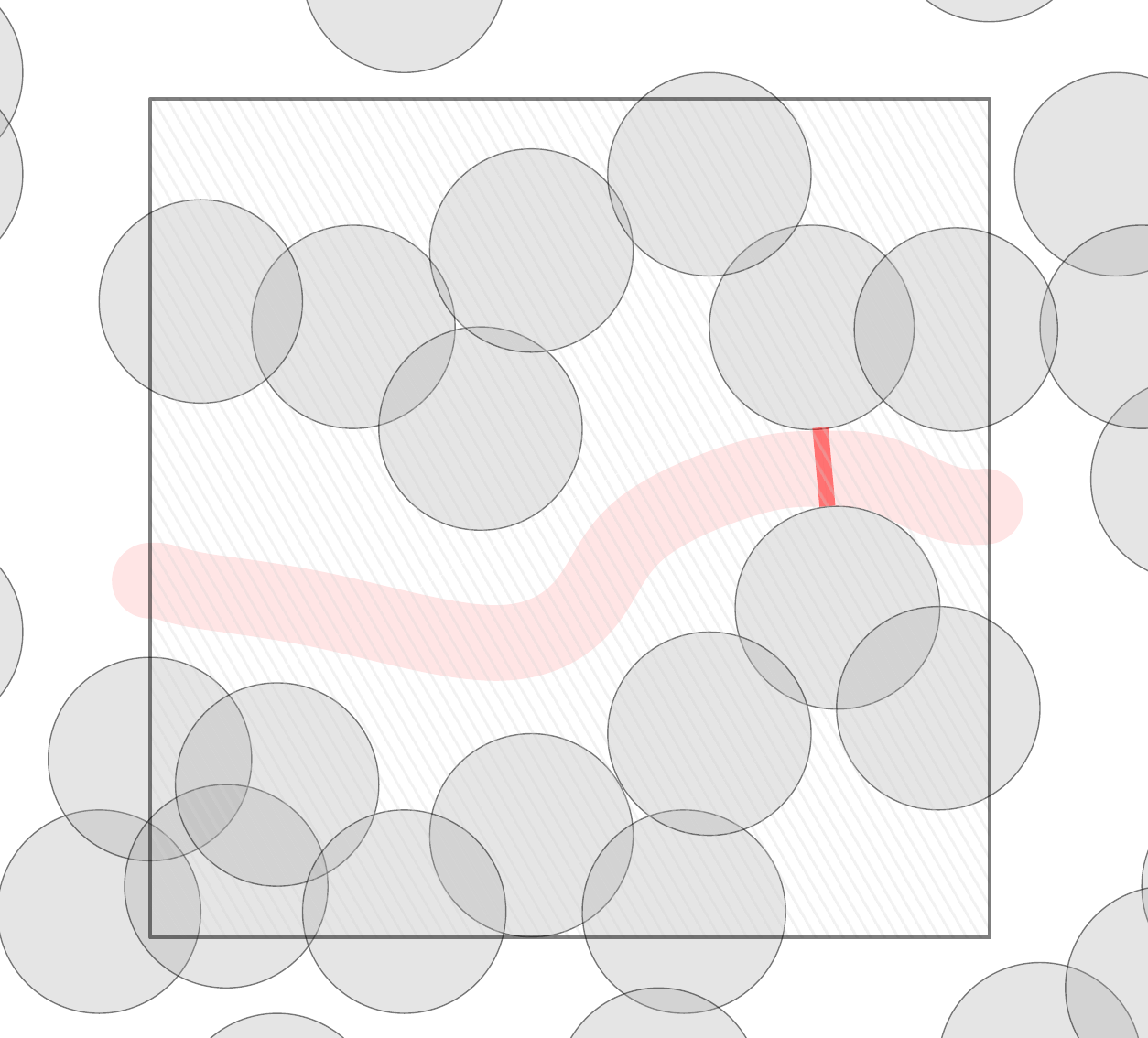}}
    \caption{The \textit{width} of an occupied (a) and vacant (b) crossing of an $n\times n$ square.}%
    \label{fig_widthOfCrossings}%
\end{figure}

Before stating our results, we define the four-arm probabilities. 
For $r \leq R$, define the four-arm event $A_4(r,R)$ as the existence of four disjoint paths $\gamma_1,\dots, \gamma_4$ 
in $\overline{B(0,R)}\setminus B(0,r)$, each starting on $\partial B(0,r)$ and ending on $\partial B(0,R)$, 
distributed in counter-clockwise order and with $\gamma_1,\gamma_3 \in \mathcal O$ and $\gamma_2,\gamma_4 \in \mathcal V$. 
Let: 
\begin{align*}
	\pi_4(r,R) = \mathbb P_{\lambda_c}[A_4(r,R)] \qquad \text{and} \qquad \pi_4(R)= \pi_4(1,R). 
\end{align*}

Our main result concerns the maximal widths of occupied and vacant crossings, when these are conditioned to exist. 
We formulate two theorems, respectively concerning the vacant and occupied cases.

\begin{theorem}[widths of vacant crossings]\label{thm:main_vacant}
	For any $\delta > 0$ and $\lambda > 0$, 
	there exist constants $0<c<C$, such that for large enough $n$:
	\begin{align}
		\P_{\lambda}\Big[\big| \w^*_n - 2\big(\tfrac{\lambda_c}{\lambda} - 1\big)\big| \leq \tfrac{C}{\lambda n^2\pi_4(n)} \;\Big|\; \cross^*(n) \Big] &\geq  1-\delta,
	    \qquad \text{ if } \lambda < \lambda_c,
	    \label{eq:main_subcritical_dual} \\
		\P_{\lambda_c}\Big[\tfrac{c}{n^2\pi_4(n)} \leq \w^*_n \leq \tfrac{C}{n^2\pi_4(n)} \;\Big|\; \cross^*(n) \Big]& \geq  1-\delta,
		\qquad\text{ if } \lambda = \lambda_c,
 		\label{eq:main_critical_dual}\\
		\P_{\lambda}\Big[\tfrac{c}{n} \leq \w^*_n \leq \tfrac{C}{n} \;\Big|\; \cross^*(n) \Big]&\geq  1-\delta,
		\qquad \text{ if } \lambda > \lambda_c.
		\label{eq:main_supercritical_dual}
	\end{align}
\end{theorem}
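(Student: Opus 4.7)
My plan would be to reduce each of the three statements to a standard off- or near-critical crossing estimate via a single coupling. The starting observation is that a horizontal vacant path $\gamma$ in $[-n,n]^2$ satisfies $\mathrm{dist}(\gamma,\calO)\geq w/2$ if and only if no point of $\eta$ lies within distance $1+w/2$ of $\gamma$, i.e.\ $\gamma$ is a vacant path in the Boolean model built from the same $\eta$ but with discs of radius $1+w/2$. Rescaling space by the factor $1+w/2$ brings us back to the standard radius-$1$ model, at the modified intensity $\lambda_w := \lambda(1+w/2)^2$ on the rescaled box of half-side $n_w := n/(1+w/2)$, yielding the identity
\begin{equation*}
    \P_\lambda\bigl[\w^*_n \geq w\bigr] \;=\; \P_{\lambda_w}\bigl[\cross^*(n_w)\bigr].
\end{equation*}
It then suffices to estimate the right-hand side in the three regimes. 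The main technical input needed is a Boolean-percolation version of Kesten's scaling relation $|\lambda-\lambda_c|\asymp 1/[L(\lambda)^2 \pi_4(L(\lambda))]$ (with $L(\lambda)$ the correlation length), together with the statement that $\P_\lambda[\cross^*(m)]$ is bounded away from $0$ and $1$ on the scale $m\asymp L(\lambda)$, tends to $1$ well below it, and decays (stretched-)exponentially beyond it.

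For the subcritical regime $\lambda<\lambda_c$ I would let $w^*$ be the unique positive root of $\lambda_{w^*}=\lambda_c$; an elementary computation gives $w^* = 2(\sqrt{\lambda_c/\lambda}-1)$, which matches the theorem's $2(\lambda_c/\lambda-1)$ to leading order as $\lambda\to\lambda_c$. Writing $w=w^*+u$ and Taylor expanding yields $\lambda_w-\lambda_c\asymp \lambda u$, and by the scaling relation $\P_{\lambda_w}[\cross^*(n_w)]$ is close to $0$ (resp.\ $1$) once $u\gg 1/(\lambda n^2\pi_4(n))$ (resp.\ $-u\gg 1/(\lambda n^2\pi_4(n))$); via the identity this yields~\eqref{eq:main_subcritical_dual}. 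At criticality $\lambda=\lambda_c$ any $w>0$ produces $\lambda_w-\lambda_c\asymp \lambda_c w$ and $n_w\asymp n$, and the crossing probability is of order $1$ exactly when $L(\lambda_w)\asymp n$, i.e.\ when $w\asymp 1/(\lambda_c n^2\pi_4(n))$, proving~\eqref{eq:main_critical_dual}.

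For the supercritical regime $\lambda>\lambda_c$, I would combine the sharpness results of~\cite{Ahlberg2018b} with Menshikov / Aizenman--Barsky-type bounds to get $\P_\lambda[\cross^*(n)]=\exp(-n\psi(\lambda)+o(n))$ for a strictly positive, increasing rate function $\psi$ on $(\lambda_c,\infty)$. Applying this at $\lambda_w$ and using the key identity,
\begin{equation*}
\frac{\P_\lambda[\w_n^*\geq w]}{\P_\lambda[\cross^*(n)]}
= \exp\bigl\{-[\psi(\lambda_w)n_w - \psi(\lambda)n]+o(n)\bigr\},
\end{equation*}
and a first-order Taylor expansion in $w$ shows that $\psi(\lambda_w)n_w-\psi(\lambda)n$ is asymptotic to a positive constant times $nw$. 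Choosing $w=C/n$ with $C$ large makes this ratio $\leq \delta$ (upper bound), while for $w=c/n$ with $c$ small the exponent stays bounded so the ratio remains $\geq 1-\delta$ (lower bound), together producing~\eqref{eq:main_supercritical_dual}.

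The hardest step, and the one I expect to contain the bulk of the work, will be proving the Boolean-percolation inputs used above: Kesten's scaling relation, sharp crossing estimates at the scale of the correlation length, and stability of arm events under small perturbations of the parameter $\lambda$. These statements are classical in Bernoulli site/bond percolation but must be transferred to the continuum, long-range and colour-asymmetric setting of the Poisson Boolean model, where the usual arm-separation and quad arguments have to be adapted so that they remain robust under the radius change exploited in the key identity.
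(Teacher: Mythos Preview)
Your coupling identity $\P_\lambda[\w_n^* \geq w] = \P_{\lambda_w}[\cross^*(n_w)]$ is exactly the paper's Corollary~\ref{cor:w_distrib} (combined with duality), and your treatment of the critical and subcritical cases via the near-critical window follows the paper's argument essentially verbatim. Incidentally, your scaling $\lambda_w=\lambda(1+w/2)^2$ is the dimensionally correct one in the plane; the paper's Lemma~\ref{lem:rescale2} writes $\lambda r$ rather than $\lambda r^2$, which accounts for the discrepancy you flagged between your centering $2(\sqrt{\lambda_c/\lambda}-1)$ and the theorem's $2(\lambda_c/\lambda-1)$.

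Your supercritical argument, however, has a genuine gap. Knowing only $\P_\lambda[\cross^*(n)]=\exp(-n\psi(\lambda)+o(n))$ does not suffice: when $w\asymp 1/n$, the main term $\psi(\lambda_w)n_w-\psi(\lambda)n$ is of order~$1$, but the two $o(n)$ corrections (one at $(\lambda,n)$, one at $(\lambda_w,n_w)$) need not cancel to something bounded---and for the lower bound you need their difference to actually tend to $0$ as $c\to0$, uniformly in $n$, which is strictly stronger still. To salvage this route you would need full Ornstein--Zernike asymptotics (sharp prefactor and differentiability of both $\psi$ and the prefactor in $\lambda$), far beyond the mere existence of a rate function. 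The paper sidesteps this entirely: it couples intensities $\lambda$ and $\lambda(1\pm C/n)$ directly and invokes \emph{structural} Ornstein--Zernike input from~\cite{CamIof02} about the vacant crossing cluster under the degenerate conditioning $\cross(n)^c$---namely that its $1$-fattening has area $O(n)$ (so sprinkled discs of intensity $c\lambda/n$ are unlikely to touch it, giving the lower bound on $\w_n^*$) and that its pivotal set has area $\Omega(n)$ (so sprinkled discs of intensity $C\lambda/n$ likely contain a pivotal, giving the upper bound). This avoids any need for precise asymptotics of $\P_\lambda[\cross^*(n)]$ itself.
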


\begin{theorem}[widths of occupied crossings]\label{thm:main_occupied}
	For any $\delta > 0$ and $\lambda > 0$, 
	there exist constants $0<c<C$, such that for large enough $n$:
	\begin{align}
		\P_{\lambda}\Big[\tfrac{c}{ \sqrt n} \leq \w_n \leq \tfrac{C}{ \sqrt n} \;\Big|\; \cross(n) \Big] &\geq  1-\delta, 
		\qquad \text{ if } \lambda < \lambda_c,
		\label{eq:main_subcritical_primal}\\ 
		\P_{\lambda_c}\Big[\tfrac{c}{n\sqrt{\pi_4(n)}} \leq \w_n \leq \tfrac{C}{n\sqrt{\pi_4(n)}} \;\Big|\; \cross(n) \Big] &\geq  1-\delta,
		\qquad \text{ if } \lambda = \lambda_c, 		\label{eq:main_critical_primal} \\
	    \P_{\lambda}\Big[\w_n \geq 2 \sqrt{1 - \big(\tfrac{\lambda_c}{\lambda} + \tfrac{C}{\lambda n^2\pi_4(n)}\big)^2}\;\Big|\; \cross(n) \Big] &\geq  1-\delta,
		\qquad \text{ if } \lambda > \lambda_c.
		\label{eq:main_supercritical_primal}		
	\end{align}	
\end{theorem}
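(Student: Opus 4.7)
The general strategy is a geometric reduction: $\{\w_n \geq 2\epsilon\}$ coincides with the existence of a horizontal crossing of $[-n,n]^2$ in the $\epsilon$-erosion $\OO^{-\epsilon} := \{x \in \R^2 : B(x,\epsilon) \subset \OO\}$. A direct computation shows that two overlapping unit discs with centre-distance $d$ remain connected in $\OO^{-\epsilon}$ iff $d \leq 2\sqrt{1-\epsilon^2}$, so up to a one-sided gap coming from multi-disc overlaps the erosion is, in terms of connectivity, a Poisson Boolean model with discs of reduced radius $\sqrt{1-\epsilon^2}$. By the scaling symmetry of the PPP this perturbed model is in law equal to a unit-disc Boolean model at an effective intensity parameter $\lambda_\epsilon$ that drifts towards $\lambda_c$ as $\epsilon$ approaches the critical threshold predicted by the theorem. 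The three regimes differ only in how close $\lambda$ itself is to $\lambda_c$.

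In the subcritical regime $\lambda < \lambda_c$, both the original and the eroded models are subcritical, with exponential decay $\P_\lambda[\cross(n)] \asymp \exp(-c(\lambda) n)$. A Taylor expansion of the rate function yields $c(\lambda_\epsilon)-c(\lambda) \asymp C_\lambda \epsilon^2$, so the conditional probability $\P_\lambda[\w_n \geq 2\epsilon \mid \cross(n)]$ is of order $\exp(-C_\lambda \epsilon^2 n)$, non-degenerate precisely when $\epsilon \asymp n^{-1/2}$. This yields the upper bound in \eqref{eq:main_subcritical_primal}; the matching lower bound is produced by a surgery that, starting from a typical crossing, builds an alternative crossing whose every bottleneck has width $\geq c/\sqrt n$, at a multiplicative cost comparable to $\P_\lambda[\cross(n)]$ itself.

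In the critical regime $\lambda = \lambda_c$, the eroded model is slightly subcritical with $\lambda_c - \lambda_\epsilon \asymp \lambda_c \epsilon^2$. Invoking the Poisson-Boolean analogue of Kesten's scaling relations (which relate the four-arm exponent, the subcritical decay rate and the correlation length), the correlation length $L$ of the eroded model satisfies $L^2\pi_4(L)\cdot \lambda_c\epsilon^2 \asymp 1$. The crossing probability in a box of side $n$ is non-degenerate iff $L \asymp n$, forcing $\epsilon \asymp 1/(n\sqrt{\pi_4(n)})$ and proving \eqref{eq:main_critical_primal}. The supercritical regime \eqref{eq:main_supercritical_primal} follows by an analogous mechanism in reverse: the eroded model stays supercritical provided $\lambda_\epsilon$ remains a definite distance above $\lambda_c$, and combining the sharp transition with the near-critical corrections of order $1/(n^2\pi_4(n))$ furnished by the scaling relations yields the explicit lower bound stated in the theorem.

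The principal obstacle is to establish the Poisson-Boolean analogue of Kesten's scaling relations in sufficiently quantitative form, as these drive both the critical case and the finite-size corrections in the other two regimes. A secondary concern is the discrepancy between the true erosion $\OO^{-\epsilon}$ and the naïve reduced-radius Boolean model caused by multi-disc overlaps; fortunately the genuine erosion always \emph{contains} the reduced-radius set, so each comparison is one-sided and monotonicity plus a simple overlap-counting estimate show the gap contributes only subleading corrections.
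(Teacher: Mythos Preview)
Your geometric reduction via the erosion $\OO^{-\epsilon}$ and the reduced-radius model $\OO^{(\sqrt{1-\epsilon^2})}$ is exactly the paper's starting point (its Lemma~\ref{lem:rescale} and Corollary~\ref{cor:w_distrib}), and it does yield all the \emph{lower} bounds on $\w_n$ along the lines you indicate. The near-critical input you call ``Kesten's scaling relations'' is precisely the content of the paper's Theorem~\ref{thm:nc_crossings}.

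The genuine gap is in the \emph{upper} bounds of~\eqref{eq:main_subcritical_primal} and~\eqref{eq:main_critical_primal}. You correctly observe that the comparison between $\OO^{-\epsilon}$ and $\OO^{(\sqrt{1-\epsilon^2})}$ is one-sided: the erosion can only be \emph{larger}, so the reduction gives $\{\w_n\geq 2\epsilon\}\supset\{\OO^{(\sqrt{1-\epsilon^2})}\text{ crosses}\}$, which is useless for bounding $\w_n$ from above. You then assert that ``a simple overlap-counting estimate'' shows the discrepancy is subleading. This is not justified, and the paper explicitly flags it as a real obstruction (Remark~\ref{rem:supercritical_occupied}): for large $\lambda$ one can have $\w_n>2$ via double chains of discs, so the two quantities are genuinely different. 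Even at or below criticality there is no evident mechanism forcing the narrowest passage of the crossing to be realised by a \emph{two}-disc overlap rather than a multi-disc one, and conditioning on the rare event $\cross(n)$ (subcritical) or on the near-critical geometry makes any naive counting delicate. The paper abandons the scaling approach entirely for these upper bounds and instead proves that, conditionally on $\cross(n)$, one can find many well-separated ``thin'' pivotal boxes (Lemmas~\ref{lem:piv_chain} and~\ref{lem:piv_chain_sc}) through which every crossing must pass, and inside each of which the local width is independently small with probability $\asymp a^2$ (Lemma~\ref{lem:connect_thin}); a union bound over these boxes then forces $\w_n$ to be small. This is a different and substantially heavier argument than what you sketch.

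A secondary issue: your subcritical upper bound relies on a Taylor expansion of the exponential decay rate $c(\lambda)$, which presupposes differentiability of $c(\cdot)$ --- a nontrivial fact you would need to establish. The paper avoids this by a direct coupling argument combined with Ornstein--Zernike-type control on the volume of the crossing cluster.
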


\smallskip

\begin{remark}\label{rem:degenerate_conditioning}
	The events in the conditioning in Theorems~\ref{thm:main_vacant} and~\ref{thm:main_occupied}  may be replaced by $\w^*_n > 0$ and $\w_n > 0$, respectively. 
	In~\eqref{eq:main_subcritical_dual},~\eqref{eq:main_critical_dual},~\eqref{eq:main_critical_primal} and~\eqref{eq:main_supercritical_primal}, 
	the conditioning has limited effect, 
	as the events have uniformly positive probability (even probability tending to $1$ in the first and last case).  
	However, in~\eqref{eq:main_supercritical_dual} and~\eqref{eq:main_subcritical_primal},
	the event in the conditioning is of exponentially small probability, and the resulting measure is highly degenerate. 
	In these cases, the vacant (and respectively occupied) clusters crossing the box have a specific structure
	described in detail in \cite{CamCC91,CamIof02,CamIofVel04}; 
	these works refer to Bernoulli percolation on the square lattice, but the statements and proofs adapt readily. 
	We will use these results in clearly stated forms, but without reproving them. 
\end{remark}

\paragraph{Organisation of the paper} 
Section~\ref{sec:preliminaries} contains certain background on the continuum percolation model. 
In particular we state a result concerning the near-critical regime, 
whose proof we only sketch, as it is very similar to existing arguments. 
Section~\ref{sec:key} contains an observation on two distinct increasing couplings of $\P_\lambda$ for $\lambda > 0$ that is the key to our arguments. 
In Section~\ref{sec:proofs_easy} we prove most of our two main results, Theorems~\ref{thm:main_vacant} and~\ref{thm:main_occupied},
using the observations of Section~\ref{sec:key}. 
Unfortunately, the upper bounds on $\w_n$ of Theorem~\ref{thm:main_occupied} are not accessible with this technique, 
and in Section~\ref{sec:proofs_pivotals} we prove these bounds using an alternative approach. 
Finally, in Section~\ref{sec:open_question}, we provide some related open questions.

\paragraph{Acknowledgements}
We thank Vincent Tassion who proposed this question during an open problem session 
at the ``Recent advances in loop models and height functions'' conference in 2019. 
The authors are grateful to Hugo Vanneuville for discussions on adapting the scaling relations to continuum percolation. 

This work started when the second author was a master student at the ETHZ, 
and the first author was visiting the FIM; we thank both institutions for their hospitality. 
The first author is supported by the Swiss NSF.

\section{Background and preliminaries}\label{sec:preliminaries}


\subsection{Positive association}

There is a natural partial ordering ``$\preceq"$ on space of possible realisations of $\eta$: 
we write $\omega \preceq \omega' $ if and only if $\OO_\omega \subset \OO_{\omega'}$.
An event $A$ is said to be \textit{increasing} if $\OO_\omega \in A $ implies that $\OO_{\omega'}\in A$ for all $\omega \preceq \omega'$. 
A useful property of increasing events is that they are positively correlated. Indeed, if $A_1$ and $A_2$ are both increasing events: 
\begin{equation}
	\label{eq:FGK}
    \P_\lambda[ A_1\cap A_2] \geq \P_\lambda[A_1] \cdot \P_\lambda[A_2].
\end{equation}
This result is known as FGK {inequality} and was proven by Roy in his doctorate thesis~\cite{Roy1990}. For a nice proof using discretization and martingale theory see \cite{Meester1996}.


\subsection{Russo's Formula}
Russo's differential formula controls how the probability of a monotone event varies under perturbations of the intensity parameter $\lambda$, assessing the variation in terms of \textit{pivotal} events. See \cite{Last2017} for a proof.

\begin{proposition}[Russo's formula]\label{prop : Russo's Formula}
	Let $A$ be an increasing event depending only on a \textit{bounded} subset of $\R^2$. Then, for every $\lambda>0$:
	\begin{align}\label{eq:russoP:der}
		\frac{{d}}{{d}\lambda}\P_{\lambda}[A] = \int\limits_{x\in \R^2} \P_\lambda \left[\mathrm{Piv}_{x}(A)\right] {d}x
	\end{align}
	where $\mathrm{Piv}_{x}(A) := \{\OO \notin A \} \cap \{\OO \cup \B_1(x) \in A\}$
	is the event that $x$ is \textit{pivotal} for $A$.
\end{proposition}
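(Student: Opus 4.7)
The plan is to use a standard superposition coupling of Poisson processes, exploiting that $A$ depends only on a bounded region. Fix a bounded set $K \subset \R^2$ on which $A$ depends, and let $K^1$ denote the $1$-neighborhood of $K$; since the discs have radius $1$, only points of $\eta$ falling in $K^1$ can affect the occupation of $K$, hence the event $A$.

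First I would introduce the coupling: for $h > 0$, let $\eta_\lambda$ and $\eta_h$ be independent Poisson point processes on $\R^2$ of intensities $\lambda$ and $h$ respectively. By the superposition property, $\eta_\lambda \cup \eta_h$ has the law of $\eta_{\lambda+h}$, and this coupling is monotone in the sense that $\mathcal{O}_{\eta_\lambda} \subset \mathcal{O}_{\eta_\lambda \cup \eta_h}$. Since $A$ is increasing, the event $\{\mathcal{O}_{\eta_\lambda} \in A, \mathcal{O}_{\eta_\lambda \cup \eta_h} \notin A\}$ has probability zero, so
\begin{equation*}
\P_{\lambda+h}[A] - \P_\lambda[A] = \P\bigl[\mathcal{O}_{\eta_\lambda} \notin A,\; \mathcal{O}_{\eta_\lambda \cup \eta_h} \in A\bigr].
\end{equation*}

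Next I would decompose this according to the number of points $N := |\eta_h \cap K^1|$, which is Poisson with parameter $h|K^1|$. Only points of $\eta_h$ in $K^1$ can change $A$, so the event above is empty when $N=0$. The crux is the case $N=1$: conditionally on this, the unique point is uniform on $K^1$, and $A$ flips exactly when this point $x$ is pivotal for $A$ under $\P_\lambda$, i.e.\ when $\mathrm{Piv}_x(A)$ holds (with $\mathcal{O} = \mathcal{O}_{\eta_\lambda}$). This yields
\begin{equation*}
\P\bigl[N=1,\; A \text{ flips}\bigr] = h\, e^{-h|K^1|} \int_{K^1} \P_\lambda[\mathrm{Piv}_x(A)]\, dx.
\end{equation*}
The contribution of $\{N \geq 2\}$ is bounded above by $\P[N \geq 2] = O(h^2)$. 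Dividing by $h$, extending the integral from $K^1$ to $\R^2$ (which adds nothing since $\mathrm{Piv}_x(A)$ forces $x \in K^1$), and letting $h \downarrow 0$ yields the right derivative; the left derivative is treated identically by swapping the roles of $\lambda$ and $\lambda+h$.

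The expected main obstacle is twofold but mild: justifying that the $O(h^2)$ contribution from $\{N \geq 2\}$ is uniform enough to survive division by $h$ (immediate from Poisson tail estimates on a bounded set), and verifying continuity of $\lambda \mapsto \int_{\R^2} \P_\lambda[\mathrm{Piv}_x(A)]\, dx$ so that left and right derivatives coincide and match the formula at every $\lambda > 0$. The latter follows from dominated convergence, using the bound $\P_\lambda[\mathrm{Piv}_x(A)] \leq \mathbf{1}_{x \in K^1}$ and the fact that the pivotal event, expressed in terms of the restriction of $\eta$ to a bounded region, depends continuously on $\lambda$.
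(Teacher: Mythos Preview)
Your argument is correct and is the standard superposition-coupling derivation of Russo's formula for Poisson point processes. The paper itself does not give a proof of this proposition but simply refers the reader to \cite{Last2017}; your approach is the one typically found in such references, so there is nothing further to compare.
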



\subsection{Crossing probabilities and RSW theory}

Let $\mathrm{cross}(r,h)$ denote the event that there exists an occupied path inside the rectangle $[-r, r] \times [-h, h]$ between its left and right sides.
We write $\mathrm{cross}^*(r,h)$ for the corresponding event for the vacant set. 
Loosely speaking, the Russo-Seymour-Welsh (RSW) theory states that a lower bound on the crossing probability for a rectangle of aspect ratio $\rho$ 
implies a lower bound for a rectangle of larger aspect ratio $\rho'$. 

\begin{proposition}[RSW]\label{prop:RSW}
    For every $\rho,\,\rho' >0$ and $\varepsilon>0$ there exits $\varepsilon'= \varepsilon'(\rho,\rho')>0$ such that:
    \begin{align}\label{eq:RSW}
	    \P_\lambda[\mathrm{cross}(\rho n,n)] > \varepsilon \; \Longrightarrow  \P_\lambda[\mathrm{cross}(\rho'n,n)] > \varepsilon '.
    \end{align}
    for all $n \geq 1$. The same holds for $\mathrm{cross}^\star.$
\end{proposition}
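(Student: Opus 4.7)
When $\rho'\le \rho$, the inclusion $\cross(\rho n,n)\subset \cross(\rho' n,n)$ is immediate: any occupied path crossing $[-\rho n,\rho n]\times[-n,n]$ must, by continuity, visit both lines $x=\pm\rho' n$, and the sub-arc between the last visit to $\{x=-\rho' n\}$ and the first subsequent visit to $\{x=\rho' n\}$ stays inside $[-\rho' n,\rho' n]\times[-n,n]$ and thus constitutes an occupied horizontal crossing of the smaller rectangle. It therefore suffices to treat $\rho'>\rho$, and by iterating a bounded number of times it is enough to establish a single lengthening step: for some $\kappa=\kappa(\rho)>0$ and $\varepsilon''(\varepsilon,\rho)>0$,
$$\P_\lambda[\cross(\rho n,n)]\ge\varepsilon \ \Longrightarrow\  \P_\lambda[\cross((\rho+\kappa)n,n)]\ge\varepsilon''.$$

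The lengthening is carried out by the classical RSW gluing. Pick two horizontally-shifted translates $R_1,R_2$ of the $2\rho n\times 2n$ rectangle so that $R_1\cup R_2$ is a translate of the $2(\rho+\kappa)n\times 2n$ rectangle and the overlap $S=R_1\cap R_2$ contains a square (adjusting $\kappa$ according to whether $\rho$ is large or small). Writing $H_i$ for the event of a horizontal occupied crossing of $R_i$ and $V$ for a vertical occupied crossing of $S$, any configuration in $H_1\cap H_2\cap V$ contains an occupied crossing of $R_1\cup R_2$. By translation invariance, $\P_\lambda[H_i]\ge\varepsilon$, and by the FKG inequality~\eqref{eq:FGK},
$$\P_\lambda[\cross((\rho+\kappa)n,n)] \ \ge\ \P_\lambda[H_1]\,\P_\lambda[H_2]\,\P_\lambda[V] \ \ge\ \varepsilon^2\,\P_\lambda[V],$$
so everything reduces to a uniform lower bound on $\P_\lambda[V]$---equivalently, on the crossing probability of a square.

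The base estimate is the square-crossing lemma: crossings of a square of side $2n$ have probability bounded away from $0$ uniformly in $n$. Since the Boolean model is invariant under the $\pi/2$ rotation of $\bbR^2$, horizontal and vertical occupied crossings of a square have equal probabilities, and complementing a horizontal occupied crossing produces---up to a buffer of width equal to the disk radius---a vertical vacant crossing of the same square. Combining the rotational symmetry with FKG and this (approximate) duality pins the square-crossing probability in $(0,1)$. The only real obstacle in adapting the classical Harris argument is this boundary buffer: disks straddling the sides of the square prevent an exact occupied/vacant duality. One handles it by enlarging the square by a constant buffer and absorbing the change into constants depending on $\lambda$. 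This adaptation is standard in the continuum RSW theory for the Poisson Boolean model and is a key ingredient of~\cite{Ahlberg2018b}. The statement for $\cross^\star$ follows by the same scheme, since vacant crossings are decreasing events and FKG applies equally to decreasing events.
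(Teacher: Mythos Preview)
The paper does not prove Proposition~\ref{prop:RSW}; it is stated as background and attributed to~\cite{Roy1990},~\cite{Alexander1996}, and~\cite{Ahlberg2018b}. So there is no in-paper argument to compare against, and I evaluate your sketch on its own.

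Your reduction to $\rho'>\rho$ and the FKG gluing of two shifted rectangles via a vertical crossing of the overlap are standard and correct \emph{once a bound on that vertical crossing is available}. The gap is in your ``square-crossing lemma.'' You claim that rotational invariance together with (approximate) occupied/vacant duality ``pins the square-crossing probability in $(0,1)$.'' It does not. Duality gives $\P_\lambda[\text{occ.\ horiz.}]+\P_\lambda[\text{vac.\ vert.}]\approx 1$, and rotation gives $\P_\lambda[\text{vac.\ vert.}]=\P_\lambda[\text{vac.\ horiz.}]$; together these only say that the occupied and vacant square-crossing probabilities sum to roughly $1$, not that either one is bounded away from $0$. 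The Harris argument you are invoking relies on \emph{self-duality} (the occupied and vacant sets having the same law), which the Poisson Boolean model lacks at every $\lambda$. Indeed an unconditional lower bound on $\P_\lambda[\cross(n,n)]$ uniform in $n$ is plainly false for $\lambda<\lambda_c$, so no amount of buffer-handling can repair this.

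What the proposition actually requires is conditional: given $\P_\lambda[\cross(\rho n,n)]>\varepsilon$, bound the longer crossing. For $\rho>1$ your scheme can be rescued, since the hypothesis itself (via your sub-arc argument and rotation) yields $\P_\lambda[\text{vertical crossing of }[-n,n]^2]\ge\varepsilon$, and one may choose the overlap to be exactly a $2n\times 2n$ square and iterate. For $\rho\le 1$, however, the hypothesis only controls an ``easy-direction'' crossing, and deducing a square (let alone longer) crossing from it is precisely the non-trivial heart of RSW theory. That step needs a genuinely different geometric argument---the model-specific constructions of~\cite{Roy1990,Alexander1996} or the renormalisation scheme of~\cite{Ahlberg2018b}---and is not supplied by your sketch.
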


 RSW bounds for continuum percolation were obtained separately for the occupied and vacant sets by Roy \cite{Roy1990} and Alexander \cite{Alexander1996} respectively, assuming in both cases heavy restrictions on the radii distribution, and later by Ahlberg et\ al.\ \cite{Ahlberg2018b} under minimal assumptions. A fundamental consequence of the RSW theorem (see again \cite{Ahlberg2018b})  concerns the abrupt change in crossing probabilities: 
\begin{itemize}
\item[(i)] For $\lambda < \lambda_c$ and all $\rho>0$, there exists $c > 0$ such that:
\begin{equation*}
	\P_\lambda[\mathrm{cross}(n,\rho n)] \leq e^{-cn}, \qquad \forall \:n\geq 1. .
\end{equation*}
\item[(ii)] At criticality, the \textit{box-crossing property }holds. That is, for every $\rho>0$ there exists $c=c(\rho)>0$ such that:
\begin{equation}\label{eq:box-crossing property}
c\leq \P_{\lambda_c}[\mathrm{cross}(\rho n,n)]\leq 1-c, \qquad \forall \:n\geq 1.
\end{equation}

\item[(iii)] For $\lambda > \lambda_c$ and all $\rho>0$, there exists $c > 0$ such that:
\begin{equation*}
	\P_\lambda[\mathrm{cross}(n,\rho n)] \geq 1 - e^{-cn}, \qquad \forall \:n\geq 1. 
\end{equation*}
\end{itemize}
The above results may be translated for the vacant set using the duality observation 
\begin{align} \label{eq:duality}
\P_\lambda[\mathrm{cross}(n,\rho n)] + \P_\lambda[\mathrm{cross}^*(\rho n,n)] = 1.
\end{align}
Indeed, a rectangle is a.s. either crossed horizontally by an occupied path, or vertically be a vacant one. 

Let us also give a corollary relating the crossing of slightly longer rectangles to that of squares. 

\begin{lemma}\label{lem:rRR}
	There exists $C > 0$ such that, for any $R \geq r \geq 1$, 
	\begin{align}\label{eq:rRR}
		\P_{\lambda_c}[\cross(R)] - \P_{\lambda_c}\big[\cross(R + r, R)\big]	\leq C \tfrac{r}{R}.
	\end{align}
\end{lemma}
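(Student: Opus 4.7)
My strategy is to use the duality relation~\eqref{eq:duality} to rewrite the difference as the probability of a single event involving vacant crossings, then reduce this event to a narrow-strip crossing that can be controlled via RSW.

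\textbf{Step~1 (Duality).} Applying~\eqref{eq:duality} both to the square (with $n=R$, $\rho=1$) and to the taller rectangle $[-R,R]\times[-(R+r),R+r]$ (with $n=R$ and $\rho n=R+r$), and noting that any horizontal vacant crossing of the square is also one of the taller rectangle (so $\cross^*(R,R)\subseteq \cross^*(R,R+r)$), I get
\[
\P_{\lambda_c}[\cross(R)] - \P_{\lambda_c}[\cross(R+r,R)]
= \P_{\lambda_c}[\cross^*(R,R+r)] - \P_{\lambda_c}[\cross^*(R,R)]
= \P_{\lambda_c}\bigl[\cross^*(R,R+r)\setminus\cross^*(R,R)\bigr].
\]
The remaining event consists of configurations in which a horizontal vacant crossing of the taller rectangle exists but no such crossing is contained in the inner square.

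\textbf{Step~2 (Topological reduction).} On that event, pick a horizontal vacant crossing $\gamma^*$ of the taller rectangle; by self-duality of the square ($\cross(R)\Leftrightarrow\neg\cross^*(R,R)$), there is also a horizontal occupied crossing $\gamma_O$ of $[-R,R]^2$, disjoint from $\gamma^*$. Since $\gamma^*$ is not contained in the square, it must visit either the top strip $T=[-R,R]\times[R,R+r]$ or the bottom strip $B=[-R,R]\times[-R-r,-R]$. By analyzing the connected components of $\gamma^*\cap([-R,R]^2)$ and using that no horizontal vacant crossing of the square exists — ruling out configurations where $\gamma^*$ crosses the square's interior from its left side to its right side — one shows that $\gamma^*$ must essentially contain a horizontal vacant crossing of $T$ (or $B$), from $\{-R\}\times[R,R+r]$ to $\{R\}\times[R,R+r]$, perhaps augmented by short arcs along the top of the square.

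\textbf{Step~3 (Strip bound).} The strip $T$ is a $(2R)\times r$ rectangle. Using RSW (Proposition~\ref{prop:RSW}) applied to overlapping $r\times r$ sub-boxes along $T$, together with FKG-based gluing in the many ($\sim R/r$) translated copies available along the strip, one bounds the probability of a horizontal vacant crossing of $T$ (and similarly of $B$) by $C\,r/R$, and hence by symmetry bounds the total probability by $Cr/R$.

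\textbf{Main obstacle.} The decisive difficulty is Step~3: extracting a \emph{linear} bound $C\,r/R$ rather than the weaker polynomial bound $(r/R)^\alpha$ with some $\alpha<1$ that a straightforward RSW iteration would produce at criticality (since the crossing probability of a wide-short rectangle decays only polynomially). To reach the linear rate one likely needs either (i) a Russo-style argument, writing
\[
\P_{\lambda_c}[\cross(R)] - \P_{\lambda_c}[\cross(R+r,R)] = \int_0^r \rho(u)\,du
\]
with $\rho(u)$ the density of the critical extension $T^*=\sup\{u:\cross(R+u,R)\text{ holds}\}$, and bounding $\rho(u)\leq C/R$ uniformly via translation invariance and boundary pivotal estimates; or (ii) a second-moment / quasi-independence argument exploiting the $\sim R/r$ translated RSW boxes along the strip as nearly independent chances to block or allow extension of the crossing. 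Either route requires more than the RSW machinery developed so far in Section~\ref{sec:preliminaries}.
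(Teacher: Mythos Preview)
Your Step~1 is correct, but Step~2 has a genuine topological error: the absence of $\cross^*(R,R)$ gives a \emph{vertical} occupied crossing $\gamma_O$ of the square (not horizontal), and more importantly, the vacant path $\gamma^*$ need not cross the full strip $T$ (or $B$) horizontally. It only needs to pass \emph{over} (or under) $\gamma_O$ through the strip, which means it enters and exits $T$ at $x$-coordinates determined by the endpoints of $\gamma_O$, not at $x=\pm R$. So what you actually get is a three-arm type event at the boundary (vacant arm going into the strip and back, occupied arm going down), not a full horizontal strip crossing. Even if you patched this, you yourself note in Step~3 that RSW only yields $(r/R)^\alpha$, and neither of your suggested fixes is carried out.

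The paper's proof avoids both problems with a direct telescoping (or ``amplification'') argument. Write the difference as $\P_{\lambda_c}[\tilde\calC(R)\setminus\calC(R+r)]$, where $\calC(k)$ is the event of a horizontal occupied crossing of $[0,2k]\times[-R,R]$ and $\tilde\calC$ is its translate by $2r$. On this event one has a horizontal occupied crossing of the inner square and a vertical vacant crossing of the longer rectangle; by RSW and a separation-of-arms argument, the occupied crossing can be extended at a uniform constant cost $c>0$ to a crossing of $[2r,2r+4R]\times[-R,R]$. The resulting configuration then lies simultaneously in \emph{every} event $\tilde\calC(R+kr)\setminus\calC(R+(k+1)r)$ for $0\le k<R/r$. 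Hence each of these has probability at least $c\,\P_{\lambda_c}[\tilde\calC(R)\setminus\calC(R+r)]$, and since they telescope to $\P_{\lambda_c}[\calC(R)]-\P_{\lambda_c}[\calC(2R)]\le 1$, the linear bound $Cr/R$ follows immediately. This is close in spirit to your option~(ii), but the key insight --- that one realization of the event, once extended, witnesses $R/r$ shifted copies whose probabilities sum telescopically --- is what you are missing.
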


\begin{proof}
	For simplicity, assume that $R$ is a multiple of $r$; the general case may be deduced by the monotonicity in $r$ of the left-hand side of~\eqref{eq:rRR}.
	Due to the inclusion of events, the difference in~\eqref{eq:rRR} may be written as 
	\begin{align}\label{eq:rRR2}
		\P_{\lambda_c}[\cross(R)] - \P_{\lambda_c}\big[\cross(R + r, R)\big]= \P_{\lambda_c}\big[\tilde \calC(R) \setminus {\calC}(R+r)\big],
	\end{align}
	where $\calC(k)$ is the event that there exists a horizontal occupied crossing of $[0,2k] \times [-R,R]$ 
	and $\tilde\calC(k)$ is the translation of this event by $2r$ to the right (see Figure~\ref{fig:calC}). 

	When $\tilde \calC(R) \setminus {\calC}(R+r)$ occurs, there exists a vertical vacant crossing of $[0,2r + 2R]\times[-R,R]$ 
	and a horizontal occupied crossing of $[2r,2r + 2R]\times[-R,R]$;
	the vertical crossing avoids the horizontal one by using the strip $[0,2r] \times [-R,R]$.
	Now, conditionally on this event, due to the RSW property (Proposition~\ref{prop:RSW}), 
	the horizontal crossing may be extended with positive probability into 
	a horizontal occupied crossing of $[0,2r + 4R]\times[-R,R]$.
	This step is not completely immediate, as it requires to prove a separation property, 
	by which the horizontal occupied crossing may be taken to end on  $\{2R\}\times[-R,R]$, far from the vertical vacant crossing.
	This type of property is classical and we will not detail its proof.

	\begin{figure}
    \centering
    	\includegraphics[height = 4cm, page =1]{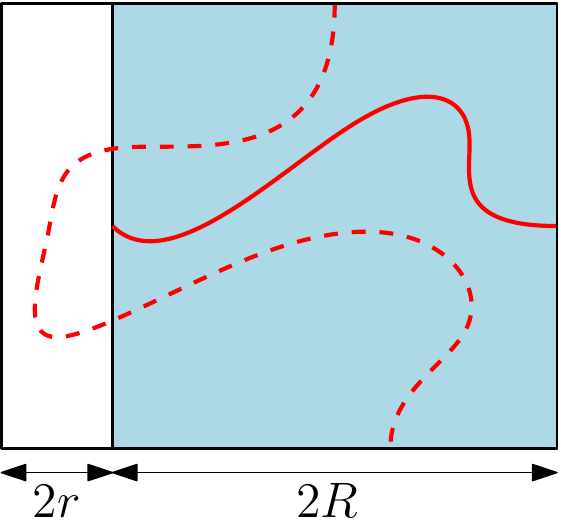}\hspace{0.05\textwidth}
    	\includegraphics[height = 4cm, page =1]{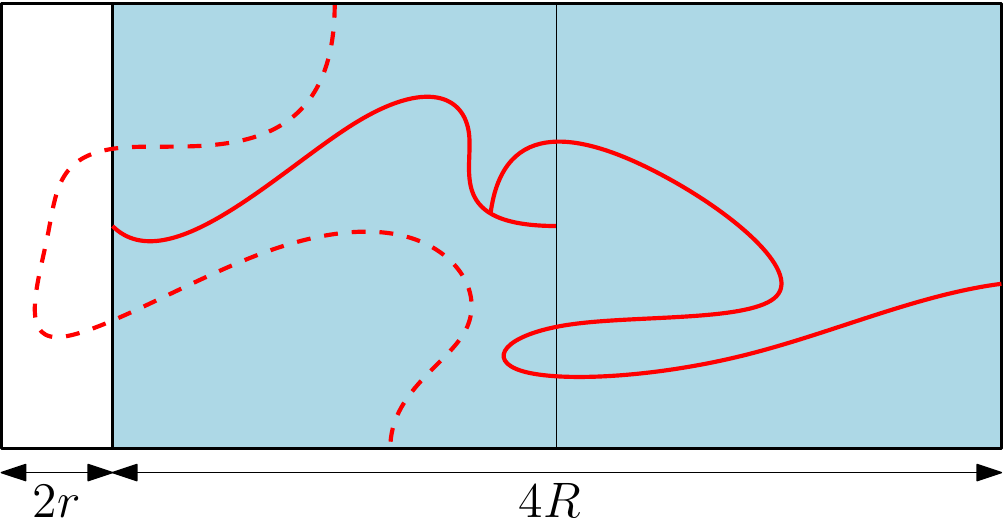}
    	\caption{{\bf Left:} A configuration in $\tilde \calC(R) \setminus {\calC}(R+r)$
    	contains a horizontal occupied crossing of the blue square $[2r,2r + 2R]\times[-R,R]$,
    	but no crossing of the slightly longer rectangle $[0,2r + 2R]\times[-R,R]$.
    	Occupied crossing are depicted by bold lines, vacant ones by dashed lines. 
    	{\bf Right:} The horizontal crossing of $[2r,2r + 2R]\times[-R,R]$ may be lengthened into one of $[2r,2r + 4R]\times[-R,R]$ at constant cost, due to the RSW theorem. 
    	This configuration belongs to  $\tilde \calC(R+kr) \setminus {\calC}(R+(k+1)r)$ for any $0 \leq k < R/r$.}
    	\label{fig:calC}
	\end{figure}
	
	 Observe that, when $[0,2r + 2R]\times[-R,R]$ contains a vertical vacant crossing, but $[2r,2r + 4R]\times[-R,R]$ contains a occupied horizontal crossing, 
	all events of the type $\tilde \calC(R+kr) \setminus {\calC}(R+(k+1)r)$ with $0 \leq k < R/r$ are realised. 
	We conclude that there exists a universal constant $c> 0$ such that, for every $0 \leq k < R/r$ 
	\begin{align*}
		\P_{\lambda_c}\big[\calC(R + k r)\big] - \P_{\lambda_c}\big[{\calC}(R+(k+1)r)\big] & \\
		= \P_{\lambda_c}\big[\tilde \calC(R + k r) \setminus {\calC}(R+(k+1)r)\big] &\geq c \P_{\lambda_c}\big[\tilde \calC(R) \setminus {\calC}(R+r)\big].
	\end{align*}
	Sum now the above over $k$ to deduce that 
	\begin{align*}
		1 \geq \P_{\lambda_c}\big[\calC(R)\big] - \P_{\lambda_c}\big[{\calC}(2R)\big] 
		\geq c \tfrac{R}r \P_{\lambda_c}\big[\tilde \calC(R) \setminus {\calC}(R+r)\big].
	\end{align*}
	Apply~\eqref{eq:rRR2} to obtain the desired inequality. 
\end{proof}

\subsection{Near-critical percolation}\label{sec:near_critical}


A fundamental question around the phase transition of percolation is the speed at which the model increases from its sub-critical behaviour to its super-critical one as $\lambda$ increases. 
In infinite volume the transition occurs instantaneously, but if we limit ourselves to a finite window, say the square of side-length $L$, 
then the model exhibits critical behaviour for an interval of intensities $\lambda$ around $\lambda_c$ called the {\em critical window}. 

Alternatively, one may state that, for any given $\lambda \neq \lambda_c$, the model behaves critically at scales up to some $L(\lambda)$, and sub- or super-critically above this scale. The scale $L(\lambda)$ is called the {\em characteristic length}, and may be shown to be equivalent to the better-known correlation length. 

This phenomenon was first proven by Kesten in 1987 \cite{Kesten1987} for Bernoulli percolation on planar lattices, 
along with an asymptotic expression for $L(\lambda)$ in terms for the number of pivotals in a box of size $n$.
Kesten's study of the near critical regime produced the so-called scaling relations, which link the algebraic decays of different natural observables of the critical and near-critical models. See also \cite{Nolin2008} for a more modern exposition of Kesten's result, and \cite{DCMT21} for a alternative proof. 

The principle of universality suggests that Kesten's results extend to a large variety of percolation models in the plane. 
They were indeed proven for Voronoi percolation in~\cite{Van19},
and appropriate alterations of Kesten's relations were extended to FK-percolation in \cite{DCM20}.

We claim that Kesten's result also applies to our model of continuum percolation. 
Below, we state a consequence of the more general theory of near-critical percolation designed to assist us in the proof of our main results. 
For $n\geq 1$, write 
\begin{align}\label{eq:alpha}
	\alpha_n := \frac{1}{ \pi_4(n) n^2}. 
\end{align}
As illustrated by the next theorem, $\alpha_n$ is the size of the {\em critical window} at scale $n$:
that is, it is the amount by which the critical parameter should be perturbed to observe off-critical features in a box of size $n$. 

\begin{theorem}[Crossings in near-critical percolation]\label{thm:nc_crossings}
	For any $\delta > 0$ there exist positive constants 
	$c(\delta),C(\delta) > 0$ such that, for all $n \geq 1$:
	\begin{align}
		&\P_{\lambda_c - C(\delta) \alpha_n}[\cross(n,2n) ] \leq \delta,\qquad
		\P_{\lambda_c + C(\delta) \alpha_n}[\cross(2n,n) ] \geq 1 - \delta,\quad \text{ and } \label{eq:nc_crossings1}\\
		&\big|\P_{\lambda}[\cross(n) ]  - \P_{\lambda_c}[\cross(n) ] \big|\leq \delta 
		\qquad \text{ when $|\lambda - \lambda_c| <  c(\delta)\alpha_n$}.\label{eq:nc_crossings2}
	\end{align}
\end{theorem}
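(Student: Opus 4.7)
The plan is to reduce both statements to a two-sided control of $\tfrac{d}{d\lambda}\P_\lambda[\cross(n,hn)]$ throughout the near-critical window $|\lambda-\lambda_c|\leq C\alpha_n$, following the near-critical framework of Kesten~\cite{Kesten1987,Nolin2008} adapted to the continuum setting as in~\cite{Van19}. The key ingredients I would need in place are Proposition~\ref{prop:RSW} and its critical corollaries, quasi-multiplicativity and separation of arms for the four-arm event, and the stability statement $\pi_4^\lambda(r)\asymp\pi_4(r)$ uniformly in $r\leq n$ and $\lambda$ with $|\lambda-\lambda_c|\leq C\alpha_n$.

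First I would apply Proposition~\ref{prop : Russo's Formula} to write
\[
	\tfrac{d}{d\lambda}\P_\lambda[\cross(n,hn)] = \int_{\R^2}\P_\lambda\big[\mathrm{Piv}_{x}(\cross(n,hn))\big]\,dx
\]
and estimate the integrand by the four-arm probability. A pivotal point $x$ forces four disjoint alternating arms from $\partial B(x,1)$ out to distance $\min(\mathrm{dist}(x,\partial R),n)$; combined with arm stability and quasi-multiplicativity, this yields the upper bound $\tfrac{d}{d\lambda}\P_\lambda[\cross(n,hn)]\leq C/\alpha_n$, uniformly in $\lambda$ in the window. For a matching lower bound, when $\P_\lambda[\cross(n,hn)]\in[\delta,1-\delta]$, an arm-separation argument together with RSW shows that every point $x$ deep inside the rectangle realising the four-arm event is actually pivotal with conditional probability at least $c(\delta)$, producing $\tfrac{d}{d\lambda}\P_\lambda[\cross(n,hn)]\geq c(\delta)/\alpha_n$ in that regime.

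Given these two-sided bounds, \eqref{eq:nc_crossings2} would follow at once by integrating the upper bound over $|\lambda-\lambda_c|\leq c(\delta)\alpha_n$ and choosing $c(\delta)=\delta/C$. For~\eqref{eq:nc_crossings1} I would argue by contradiction: if $\P_{\lambda_c - C(\delta)\alpha_n}[\cross(n,2n)] > \delta$, then monotonicity of $\lambda\mapsto\P_\lambda[\cross(n,2n)]$ combined with the RSW bound $\P_{\lambda_c}[\cross(n,2n)]\leq 1-c_0$ keeps the probability inside $[\delta,1-c_0]$ throughout $[\lambda_c-C(\delta)\alpha_n,\lambda_c]$; the lower derivative bound then applies on this whole interval and integration gives a total variation of at least $c'(\delta)\,C(\delta)$, which exceeds $1$ once $C(\delta)$ is chosen large enough, a contradiction. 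The supercritical half of~\eqref{eq:nc_crossings1} is symmetric, using the dual event $\cross^*$.

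The hard part is the prerequisite stability $\pi_4^\lambda(r)\asymp\pi_4(r)$ in the window, which is itself the core of Kesten's scaling theory; it is proved by a bootstrap in which Russo's formula is applied to the arm events themselves and combined with quasi-multiplicativity to show that arm exponents cannot drift outside the window. This adaptation should be routine in principle, since every required ingredient---FKG, RSW, quasi-multiplicativity, arm separation---is available for continuum percolation from~\cite{Ahlberg2018b}; but carefully verifying that the bootstrap closes in our setting is the substantial technical point that the paper elects to only sketch.
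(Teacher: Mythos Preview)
Your proposal follows essentially the same route as the paper: Russo's formula, two-sided control of the derivative via the four-arm probability, stability of arm events inside the critical window obtained by a Kesten-style bootstrap, and then integration. The paper organises the argument around the characteristic length $L_\delta(\lambda)$ rather than directly around the window $|\lambda-\lambda_c|\leq C\alpha_n$, and derives~\eqref{eq:nc_crossings1} from the asymptotic $\P_\lambda[\cross(n)]-\P_{\lambda_c}[\cross(n)]\asymp(\lambda-\lambda_c)n^2\pi_4(n)$ rather than by your contradiction argument, but these are cosmetic differences.

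There is one continuum-specific point the paper flags that you gloss over. To show that the bulk dominates the integral in Russo's formula (and hence to close the bootstrap for the logarithmic derivative of $\pi_4$), one needs the a~priori bound $\P_\lambda[A_4(r,R)]\geq c(r/R)^{2-c}$, i.e.\ that the four-arm exponent is strictly below~$2$. In Kesten's argument this is obtained by comparing to the five-arm event, whose exponent is exactly~$2$. The paper points out that in the continuum model the naive five-arm event does not suffice; one must require that the two occupied arms on the same side be covered by \emph{disjoint families of discs} (discs may overlap, but no disc belongs to both families). With this definition the usual second-moment and Reimer-type arguments go through. Your claim that ``every required ingredient is available from~\cite{Ahlberg2018b}'' is therefore slightly optimistic: this disjoint-occurrence formulation of $A_5$ is the one genuinely new wrinkle the paper identifies in adapting Kesten's program.
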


The technique developed in \cite{Kesten1987} is easily adapted to continuum percolation when the radii of the discs are fixed (as is the case here), or have compact support in $(0,+\infty)$. Beyond these situations, conditions on the tails of the distribution of radii towards $0$ and $\infty$ are necessary, and the adaptation would require significant additional work. 


In the rest of this section, we overview the classical argument of Kesten and discuss how it needs to be adapted to continuum percolation in order for it to produce Theorem~\ref{thm:nc_crossings}. We start by sketching  Kesten's argument in our context. 
\medskip

For $\delta > 0$ define the \textit{characteristic length} at $\lambda>0$ as
\begin{equation}\label{def:cor:length} 
    \begin{split}
     L_\delta(\lambda):=
    \begin{cases} 
     	\inf\{n\geq 1 : \P_\lambda\big(\mathrm{cross}(n)\big)\geq  1-\delta\}&\quad\text{if } \lambda> \lambda_c,\\
    	\inf\{n\geq 1 : \P_\lambda\big(\mathrm{cross}^{*}(n)\big)\geq 1-\delta\}& \quad\text{if } \lambda< \lambda_c.
      \end{cases} 
    \end{split}
\end{equation}
Fix $\delta$ for the rest of the section and omit it from the notation. 
Below we use the notation $\asymp$ to relate two quantities whose ratios are uniformly bounded, with constants that may depend on $\delta$. 

A first step in the proof of Theorem~\ref{thm:nc_crossings} is to observe that the RSW theory implies uniform bounds on crossing probabilities ``under the characteristic length''. 
Indeed, Proposition~\ref{prop:RSW} implies that for any $\rho > 0$ there exists $c = c(\rho) > 0$ such that 
\begin{align}\label{eq:BXP}
	c < \P_\lambda(\mathrm{cross}(\rho n,n)) < 1- c \quad \text{ for all $\lambda$ and $1 \leq n \leq L(\lambda)$}.
\end{align}
This fact will be used implicitly in all arguments below. 

Observe now that Russo's formula~\eqref{eq:russoP:der} applied to the event $\mathrm{cross}(n)$ reads 
\begin{equation}\label{eq:Russo2}
	\frac{\mathrm{d}}{\mathrm{d}\lambda}\P_{\lambda}[\mathrm{cross}(n)] = \int_{[-n,n]^2} \P_\lambda \left[\mathrm{Piv}_{x}(\mathrm{cross}(n))\right] dx.
\end{equation}
For points $x$ in the bulk of $[-n,n]^2$, that is at a distance of order $n$ from the boundary, 
the probability of pivotality may be approximated by that of the four arm event. 
A slightly involved analysis shows that the bulk provides the major contribution to~\eqref{eq:Russo2} when $1 \leq n \leq L(\lambda)$. Thus we find 
\begin{equation}\label{eq:Russo3}
	\frac{\mathrm{d}}{\mathrm{d}\lambda}\P_{\lambda}[\mathrm{cross}(n)] \asymp n^2\, \P_{\lambda}[A_4(1,n)] \qquad \text{ for all $1 \leq n\leq L(\lambda)$}. 
\end{equation}

A similar reasoning may be used to bound the logarithmic derivative of $\P_{\lambda}[A_4(1,n)]$ by $c_0  n^2\, \P_{\lambda}[A_4(1,n)]$, for some universal constant $c_0$. 
Integrating both of these expressions, we conclude that 
\begin{equation*}
	\Big|\log \frac{ \P_{\lambda}[A_4(1,n)]}{ \P_{\lambda_c}[A_4(1,n)]} \Big|
	\leq c_0( \P_{\lambda}[\mathrm{cross}(n)] - \P_{\lambda_c}[\mathrm{cross}(n)]), \quad  \text{ for any $\lambda > 0$ and  $1 \leq n \leq L(\lambda)$}.
\end{equation*}
Now, since the right-hand side of the above is contained in $[-1,1]$, we conclude that 
\begin{align}\label{eq:stability_of_four_arm}
	\P_{\lambda}[A_4(1,n)] \asymp \P_{\lambda_c}[A_4(1,n)] \qquad  \text{ for any $\lambda > 0$ and  $1 \leq n \leq L(\lambda)$}.
\end{align}
This result is known as the {\em stability of arm-event probabilities} within the critical window.
It is the crucial step in Kesten's argument \cite{Kesten1987} and in its extensions \cite{Van19,DCM20}; 
a different proof of~\eqref{eq:stability_of_four_arm} is the object of \cite{DCMT21}.

Finally, plugging~\eqref{eq:stability_of_four_arm} back into~\eqref{eq:Russo3} and integrating, we find 
\begin{align*}
	\P_{\lambda}[\mathrm{cross}(n)] - \P_{\lambda_c}[\mathrm{cross}(n)] \asymp (\lambda - \lambda_c)\, n^2 \pi_4(n)
	\quad   \text{ for any $\lambda > 0$ and  $1 \leq n \leq L(\lambda)$}.
\end{align*}
The above directly implies Theorem~\ref{thm:nc_crossings}.\medskip

The program described above applies to continuum percolation with only slight additions. 
When proving~\eqref{eq:Russo2} for the logarithmic derivative of the four arm event, 
the argument showing that the bulk provides the majority of the contribution uses the existence of $c > 0$ such that 
\begin{align}\label{eq:four_arm_bound}
	\P_{\lambda}[A_4(r,R)] \geq c (r/R)^{2-c} \qquad  \text{ for any $\lambda > 0$ and $r < R \leq L(\lambda)$},
\end{align}
which may colloquially be stated as ``the four arm exponent is strictly smaller than two inside the critical window''.

In order to show~\eqref{eq:four_arm_bound}, Kesten compares the four arm event to the five arm one, which he shows has associated exponent equal to $2$. 
For continuum percolation, an appropriate definition of the five-arm event is necessary for this reasoning to function. 
For $r <R$, let $A_5(r,R)$ be the event that there exist five disjoint paths $\gamma_1,\dots, \gamma_5$ 
in $\overline{B(0,R)}\setminus B(0,r)$, each starting on $\partial B(0,r)$ and ending on $\partial B(0,R)$, 
distributed in counter-clockwise order, with $\gamma_1,\gamma_3, \gamma_5 \in \mathcal O$ and $\gamma_2,\gamma_4 \in \mathcal V$
and such that there exist two disjoint families of discs in $\mathcal O$, the first covering $\gamma_1$ and the second covering $\gamma_5$.
The discs of the two families may overlap, but no disc is allowed to belong to both families. 

Using this definition, the same arguments as for Bernoulli percolation on the square lattice \cite{Kesten1987,Nolin2008} show the existence of $c > 0$ such that
\begin{align*}
	\P_{\lambda}[A_5(r,R)] \geq c (r/R)^{2} 
	\quad\text{ and }\quad 
	\P_{\lambda}[A_4(r,R)] \geq (R/r)^{c}\P_{\lambda}[A_5(r,R)] 
\end{align*}
for any  $\lambda > 0$ and $r < R \leq L(\lambda)$.
This suffices to prove~\eqref{eq:four_arm_bound}, and therefore Theorem~\ref{thm:nc_crossings}. 



\section{Different couplings: a key observation}\label{sec:key}

For a point process configuration  $\eta$ and $r \geq 0$, set:
\begin{equation}\label{eq:enlarged_occ}
	\OO^{(r)}= \bigcup_{x\in \mathrm{supp}(\eta)}B(x,r).
\end{equation}
With this notation, $\OO =\OO^{(1)}$.
Write $\OO^{(r)} \in \cross(n)$ for the event that $\OO^{(r)}$ contains a horizontal crossing of $[-n,n]^2$. 
More generally, write $\OO^{(r)} \in \cross(m,n)$ for the event that $\OO^{(r)}$ contains a path crossing $[-m,m] \times [-n,n]$ horizontally. 
Finally, set $\V^{(r)} = \R^2 \setminus \OO^{(r)}$ and write $\V^{(r)} \in \cross^*(n)$ for the event 
$\V^{(r)}$ contains a horizontal crossing of $[-n,n]^2$. 

The key to our argument is contained in the following two simple observations. 

\begin{lemma}\label{lem:rescale}
	For any $\lambda > 0$:
	\begin{align}\label{eq_vacant_width_enlargeBalls}
		\w_n^* =& \:2 \sup \{ \varepsilon \geq 0 : \V^{(1+\varepsilon)} \in \cross^*(n)\}\quad \text{ and }\\
\label{eq_ineq_occupied_width_enlargeBalls}
		\w_n \geq & \: 2 \sqrt{1 - \inf \{ r \leq 1 : \OO^{(r)} \in \cross(n+ \sqrt{1 - r^2},n-1)\}^2}.
		\end{align}
	where the supremum and infimum above are considered equal to $0$ and $1$, respectively, if the set in question is empty. 
\end{lemma}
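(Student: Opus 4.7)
The plan is to handle the two statements separately.

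\emph{Part \eqref{eq_vacant_width_enlargeBalls}.} This is a direct translation between the two descriptions of the vacant ``width''. For any $\varepsilon \geq 0$, a set $\gamma$ lies in $\V^{(1+\varepsilon)}$ if and only if every $y \in \gamma$ satisfies $|y - x| \geq 1 + \varepsilon$ for every $x \in \mathrm{supp}(\eta)$. Since $\OO = \bigcup_x B(x,1)$, the triangle inequality immediately shows this is equivalent to $\mathrm{dist}(y, \OO) \geq \varepsilon$ for every $y \in \gamma$: in one direction, $|y - z| \geq |y-x| - |x-z| \geq \varepsilon$ for any $z \in B(x,1)$; in the other, the distance from $y$ to the nearest unit ball is precisely $(|y-x|-1)_+$. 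Taking the supremum over $\varepsilon$, or equivalently over horizontal vacant crossings $\gamma$ of $[-n,n]^2$, yields~\eqref{eq_vacant_width_enlargeBalls}.

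\emph{Part \eqref{eq_ineq_occupied_width_enlargeBalls}.} Fix any $r < 1$ with $\OO^{(r)} \in \cross(n + \sqrt{1-r^2}, n-1)$; it is enough to exhibit an occupied horizontal crossing of $[-n,n]^2$ of width at least $2\sqrt{1-r^2}$ and then take the infimum over $r$. Let $\gamma \subset \OO^{(r)}$ be a crossing of the widened rectangle. By compactness, I cover $\gamma$ by finitely many discs $B(x_1, r), \ldots, B(x_k, r)$ with $x_i \in \mathrm{supp}(\eta)$, ordered along $\gamma$ and with consecutive discs sharing a point of $\gamma$ (so $|x_i - x_{i+1}| \leq 2r$). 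The candidate path $\tilde\gamma$ is the piecewise-linear curve $x_1 \to x_2 \to \cdots \to x_k$, with $x_1$ chosen to cover the leftmost point of $\gamma$ and $x_k$ the rightmost.

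The key geometric input is that the ``lens'' geometry of two overlapping unit discs guarantees clearance $\sqrt{1-r^2}$ along each segment $[x_i, x_{i+1}]$. Placing $x_i = (0,0)$ and $x_{i+1} = (d,0)$ with $d = |x_i - x_{i+1}| \leq 2r$, a direct computation at $y = (td, 0)$ and $z = y + s(\cos\theta, \sin\theta)$ yields
\begin{align*}
|z - x_i|^2 = s^2 + t^2 d^2 + 2tds\cos\theta, \qquad |z - x_{i+1}|^2 = s^2 + (1-t)^2 d^2 - 2(1-t) ds \cos\theta,
\end{align*}
so that $\max_\theta \min(|z-x_i|, |z-x_{i+1}|)^2 = s^2 + t(1-t) d^2$. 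Hence $B(y, s) \subset B(x_i, 1) \cup B(x_{i+1}, 1) \subset \OO$ whenever $s \leq \sqrt{1 - t(1-t)d^2}$, and the worst case $t = \tfrac12$, $d = 2r$ yields the uniform bound $\mathrm{dist}(y, \V) \geq \sqrt{1-r^2}$ along $\tilde\gamma$.

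What remains, and what I expect to be the most finicky step, is to turn $\tilde\gamma$ into a genuine horizontal crossing of $[-n,n]^2$. Every covering center $x_i$ lies within distance $r \leq 1$ of $\gamma$, so its $y$-coordinate sits in $[-(n-1)-r, (n-1)+r] \subset [-n,n]$ and vertical containment is automatic. Horizontally, $x_1$ has $x$-coordinate in $[-n-\sqrt{1-r^2}-r,\, -n-\sqrt{1-r^2}+r]$ and may either lie to the left of $-n$ or fall short by at most $r - \sqrt{1-r^2} \leq 1 - \sqrt{1-r^2}$. In the first case a sub-arc of $\tilde\gamma$ already crosses $\{x=-n\}$; in the second I prepend the horizontal segment from $x_1$ to $(-n, y_1)$, which has length at most $1 - \sqrt{1-r^2}$, lies inside $B(x_1, 1) \subset \OO$, and on which $\mathrm{dist}(\cdot, \V) \geq 1 - r + \sqrt{1-r^2} \geq \sqrt{1-r^2}$. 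The symmetric construction on the right produces a continuous curve in $\OO$ reaching both $\{x=-n\}$ and $\{x=n\}$ while maintaining width $\geq 2\sqrt{1-r^2}$; an intermediate-value argument applied to its $x$-coordinate then extracts a sub-path contained in $[-n,n]^2$ and crossing it horizontally, witnessing $\w_n \geq 2\sqrt{1-r^2}$. The choice of rectangle dimensions $n + \sqrt{1-r^2}$ (horizontally) and $n-1$ (vertically) is precisely what makes both the vertical inclusion and the bounded horizontal extension work for every $r \in [0,1]$.
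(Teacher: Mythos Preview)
Your proof is correct and follows essentially the same route as the paper. Both arguments reduce~\eqref{eq_vacant_width_enlargeBalls} to the tautological equivalence ``$\gamma\subset\V^{(1+\varepsilon)}$ iff $\mathrm{dist}(\gamma,\OO)\geq\varepsilon$'', and for~\eqref{eq_ineq_occupied_width_enlargeBalls} both pick a chain of centres $x_0,\dots,x_k$ covering an $\OO^{(r)}$-crossing, interpolate linearly, extend by short horizontal segments at the ends, and invoke the lens geometry to get clearance $\sqrt{1-r^2}$; your version is in fact more carefully written, since you make the lens computation explicit and use an intermediate-value extraction rather than asserting directly that the centres lie in $[-n,n]^2$.
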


\begin{remark}\label{rem:supercritical_occupied}
	The above provides only a lower bound on the width of occupied crossings. 
	This is for good reason, as the two quantities in~\eqref{eq_ineq_occupied_width_enlargeBalls} are not generally equal. 
	For $\lambda \leq \lambda_c$, it is expected that the two quantities are typically equal, but that fails for general values of $\lambda$. 
Indeed, for $\lambda$ sufficiency large, one may typically find $\w_n > 2$ due to the creation of crossings of $[-n,n]^2$ by ``double paths'' of discs.
\end{remark} 

\begin{figure}[h!]
    \begin{center}
    \includegraphics[scale=.98]{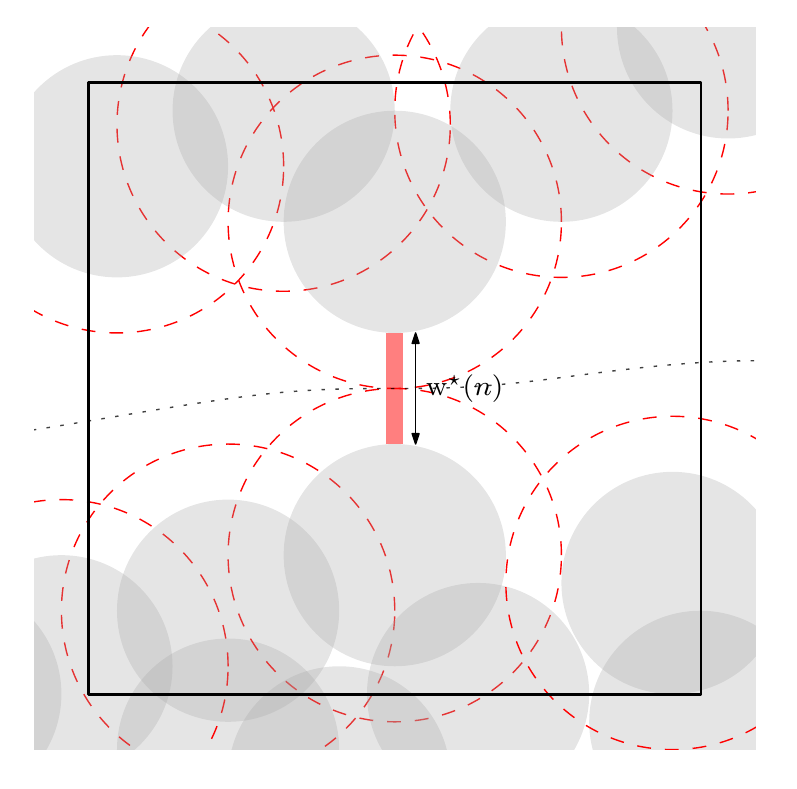}
    \caption{Computing the width of a vacant crossing by enlarging the balls.}
    \label{fig:vacant_cross}
    \end{center}
\end{figure}

\begin{proof}
	We start with~\eqref{eq_vacant_width_enlargeBalls}. 
	The equality is trivial when $\mathcal V \notin \cross^*(n)$, as both terms are equal to $0$. 
	Fix $\eta$ for which  $\mathcal V \in \cross^*(n)$. Figure~\ref{fig:vacant_cross} may be useful for understanding this proof. 

	Fix $0 < \varepsilon < \w^*(n)/2$ and let $\gamma$ be a horizontal crossing of $[-n,n]^2$ 
	for which $\textrm{dist}(\gamma, \OO) > \varepsilon$;
	the existence of such a path is guaranteed by the definition~\eqref{eq:def:width} of $\w_n^*$.
	Then $\gamma \in \V^{(1+\varepsilon)}$, and therefore $\V^{(1+\varepsilon)} \in \cross^*(n)$.
	This allow us to conclude that 
	\begin{align*}
		\w_n^* \leq 2\cdot \sup \{ \varepsilon \geq 0 \;:\; \V^{(1+\varepsilon)} \in \cross^*(n)\}.
	\end{align*}
	
	Conversely, for $\varepsilon$ such that $\V^{(1+\varepsilon)} \in \cross^*(n)$, let $\gamma$ be a horizontal crossing of $[-n,n]$ contained in $\V^{(1+\varepsilon)}$. Then $\textrm{dist}(\gamma, \OO) \geq \varepsilon$, and therefore $\w_n^* \geq 2 \varepsilon$. 
	This proves that 
	\begin{align*}
		\w_n^* \geq 2\cdot \sup \{ \varepsilon \geq 0 \;:\; \V^{(1+\varepsilon)} \in \cross^*(n)\}.
	\end{align*}
	Combine the last two displays to obtain the equality in~\eqref{eq_vacant_width_enlargeBalls}. \medskip 
	
	We move on to the inequality~\eqref{eq_ineq_occupied_width_enlargeBalls}, namely that involving $\w_n$. Figure~\ref{fig:occupied_cross_w} may be useful for understanding this proof. 
	The inequality is trivial when the right-hand side is equal to $0$. 
	We focus on the converse, and we fix $\eta$ and  $r  < 1$ such that $\mathcal O^{(r)} \in \cross(n + \sqrt{1 - r^2},n-1)$.
	Then there necessarily exists a family $x_0,\dots, x_k \in \eta \cap [-n,n]^2$ 
	such that $|x_i - x_{i-1}| \leq 2r$ for all $i = 1,\dots,k$, 
	and with $x_0$ and $x_k$ at a distance at least $1 - \sqrt{1 - r^2}$ from the left and right sides of $ [-n,n]^2$, respectively. 
	
	Consider now $\gamma$ to be the shortest path going through the vertices $x_0,\dots, x_k$ in this order. 
	Furthermore, add to $\gamma$ the initial horizontal segment going from the left side of $[-n,n]^2$ to $x_0$
	and the final horizontal segment going from $x_k$ to the right side of $[-n,n]^2$
	Then $\gamma$ crosses $[-n,n]^{2}$ horizontally. 
	
	Now, as may be observed in Figure~\ref{fig:occupied_cross_w}, 
	\begin{align*}
		\tfrac12 \w_n 
		\geq {\rm dist}(\gamma, \mathcal V) 
		\geq {\rm dist} \big(\gamma, \big[\bigcup_{i=0}^k B_1(x_i)\big]^c\big) 
		\geq \sqrt{1 - r^2}.
	\end{align*}
	Taking the infimum over $r$ in the above, we find~\eqref{eq_ineq_occupied_width_enlargeBalls}.
	
	\begin{figure}
	\begin{center}
	\includegraphics[width = .9\textwidth]{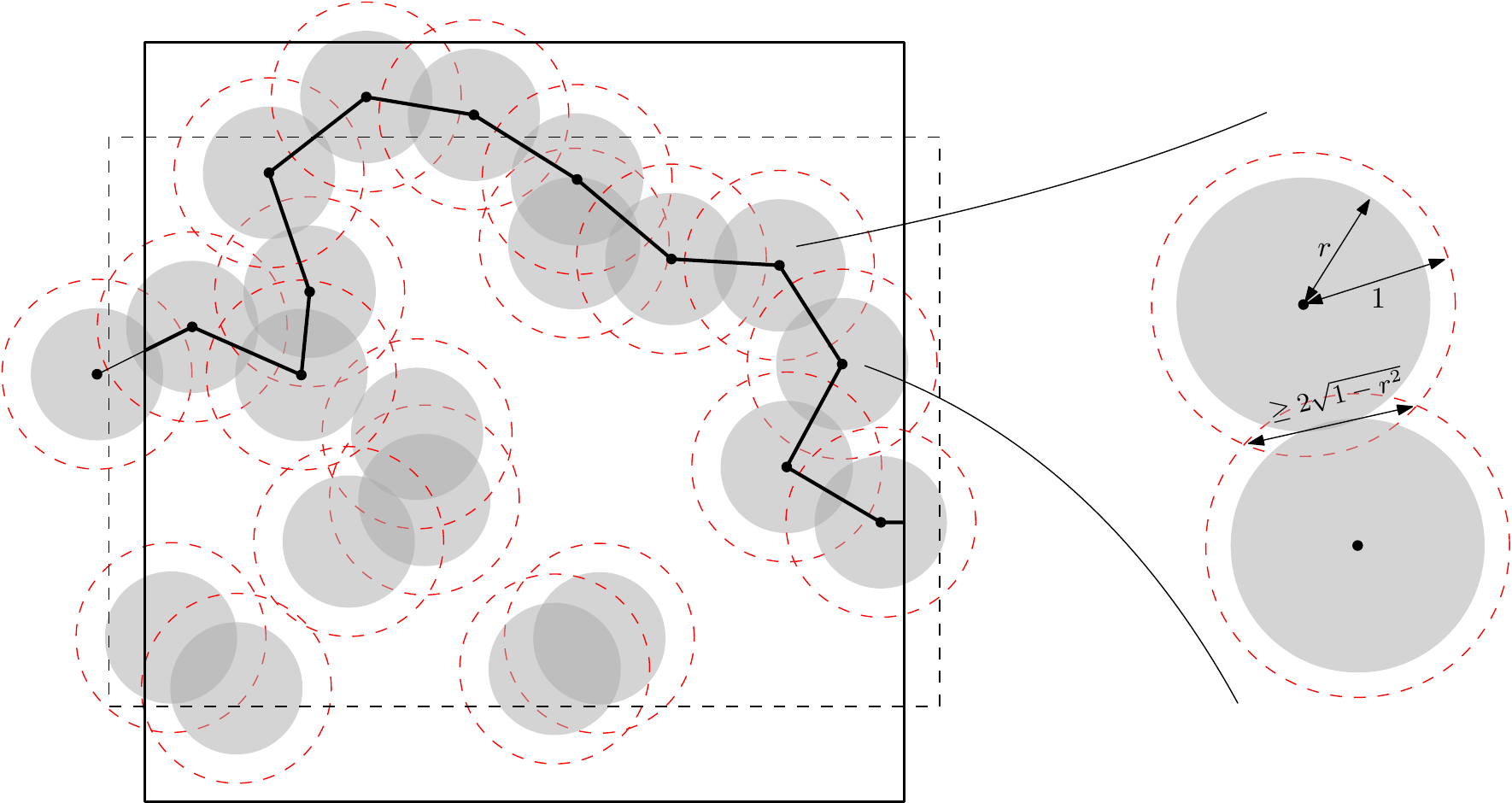}
	\caption{When $\mathcal O^{(r)} \in \cross(n+ \sqrt{1 - r^2},n-1)$, one may identify a chain of points of $\eta$, each at a distance at most $2r$ from the previous, contained in $\bbR \times [-n,n]$, with the first and last within a distance $r$ of the left and right sides of the rectangle, respectively. 
	The path $\gamma$ (bold black path) is obtained by interpolating linearly between these points, and potentially connecting the first and last points by horizontal lines to the sides of $[-n,n]^2$. 
	The distance from $\gamma$ to $\big[\bigcup_{i=0}^k B_1(x_i)\big]^c$ is attained at the center of one of the segments $[x_{i-1},x_i]$.}
	\label{fig:occupied_cross_w}
	\end{center}
	\end{figure}
\end{proof}

The second key observation is related to the scaling properties of Poisson point processes. 

\begin{lemma}\label{lem:rescale2}
	For every $\lambda>0$ and $r>0$, the law of $\frac{1}{r}\OO^{(r)}$ under $\P_\lambda$ is equal to the law of $\OO$ under $ \P_{\lambda r}$.
	In particular 
	\begin{align}\label{eq:rescale}
		\P_\lambda[\OO^{(r)} \in \cross(n)] = \P_{\lambda r}[\OO\in \mathrm{cross}(n/r)].
	\end{align}
\end{lemma}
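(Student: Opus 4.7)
The plan is to invoke the Mapping Theorem for Poisson point processes applied to the dilation $\phi_r : \R^2 \to \R^2$, $x \mapsto x/r$. Since $\phi_r$ is a measurable bijection, the image $\phi_r(\eta)$ is again a Poisson point process on $\R^2$, whose intensity measure is the pushforward $(\phi_r)_\ast(\lambda \cdot {\rm Leb})$. A short change-of-variables computation identifies this pushforward with the scalar multiple of Lebesgue measure asserted in the lemma, so that $\phi_r(\eta)$ has the law of the driving process of $\P$ at the rescaled intensity.

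The second step is a set-theoretic manipulation identifying $\tfrac{1}{r}\OO^{(r)}$ with the occupied set of a Boolean model with unit balls built on $\phi_r(\eta)$. Indeed, $\tfrac{1}{r}B(x,r) = B(x/r,1)$, so
\[
\tfrac{1}{r}\OO^{(r)} \;=\; \bigcup_{x \in \mathrm{supp}(\eta)} B\bigl(\phi_r(x),1\bigr) \;=\; \bigcup_{y \in \mathrm{supp}(\phi_r(\eta))} B(y,1).
\]
Combined with step one, this is precisely the occupied set $\OO$ of the rescaled Poisson Boolean model, and the distributional equality of the lemma follows.

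For the crossing identity~\eqref{eq:rescale}, I would simply transfer the event along $\phi_r$: the dilation is a homeomorphism sending $[-n,n]^2$ onto $[-n/r,\,n/r]^2$, mapping left and right sides to left and right sides and horizontal paths to horizontal paths. Hence $\OO^{(r)}$ contains a horizontal crossing of $[-n,n]^2$ if and only if $\tfrac{1}{r}\OO^{(r)}$ contains one of $[-n/r,n/r]^2$, and applying the distributional equality from step two yields~\eqref{eq:rescale}.

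The only place requiring any care is the pushforward computation in step one, where one must match the Jacobian factor with the rescaled intensity parameter appearing in the statement; everything else is formal. I do not anticipate any substantial obstacle, since the Mapping Theorem is standard and the remaining manipulations are set-theoretic identities preserved under the bijection $\phi_r$.
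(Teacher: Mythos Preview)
Your approach is essentially identical to the paper's: both rewrite $\tfrac{1}{r}\OO^{(r)}$ as the unit-radius Boolean model built on the rescaled point process $\tfrac{1}{r}\eta$, and then appeal to the scaling behaviour of Poisson processes. You are simply more explicit in naming the Mapping Theorem and in spelling out the transfer of the crossing event under the homeomorphism $\phi_r$; the paper compresses all of this into two lines.

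Your caveat about the Jacobian is, however, exactly the right place to pause---and if you actually carry out the computation you will not recover the intensity asserted in the lemma. In $\R^2$ the pushforward of $\lambda\cdot{\rm Leb}$ under $\phi_r(x)=x/r$ satisfies, for Borel $A$,
\[
(\phi_r)_\ast(\lambda\,{\rm Leb})(A)=\lambda\,{\rm Leb}(\phi_r^{-1}(A))=\lambda\,{\rm Leb}(rA)=\lambda r^{2}\,{\rm Leb}(A),
\]
so $\tfrac{1}{r}\eta$ is Poisson of intensity $\lambda r^{2}$, not $\lambda r$. The paper's one-line proof makes the same slip. The structure of your argument is unaffected---only the exponent on $r$ changes---but the corrected identity reads $\P_\lambda[\OO^{(r)}\in\cross(n)]=\P_{\lambda r^{2}}[\OO\in\cross(n/r)]$, and this discrepancy then propagates into Corollary~\ref{cor:w_distrib} and the explicit constants appearing in Theorems~\ref{thm:main_vacant} and~\ref{thm:main_occupied} (for instance, $\tfrac{\lambda_c}{\lambda}-1$ in~\eqref{eq:main_subcritical_dual} becomes $\sqrt{\tfrac{\lambda_c}{\lambda}}-1$). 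So your plan is correct and matches the paper; it is the target statement that needs adjusting.
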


\begin{proof}
	Fix $\eta$. Observe that 
	$$ \tfrac{1}{r}\OO^{(r)} =  \bigcup_{x\in \frac1r\mathrm{supp}(\eta)}B(x,1).$$
	Now, if $\eta$ is a Poisson point process of intensity $\lambda$, 
	then $\frac1r\mathrm{supp}(\eta)$ is distributed as a Poisson point process of intensity $\lambda r$. The result follows. 
\end{proof}

The two lemmas above combine to prove the following corollary.

\begin{corollary}\label{cor:w_distrib}
	For any $a \geq 0$ and $\lambda > 0$
	\begin{align} 
		\P_\lambda[\w_n^* \leq 2a ] &=  \P_{\lambda(1+a)}[\mathcal O \in \cross(\tfrac{n}{1+a})]  &&\text{ and } \label{eq:w_n^*_distrib}\\
		\P_\lambda[\w_n > 2a ] & \geq \P_{\lambda\sqrt{1 - a^2}}[\mathcal O \in \cross(\tfrac{n+a}{\sqrt{1 - a^2}}, \tfrac{n-1}{\sqrt{1 - a^2}}) ].  && \label{eq:w_n_distrib}
	\end{align}
\end{corollary}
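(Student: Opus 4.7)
The plan is to chain Lemma~\ref{lem:rescale} (which translates the width events into crossing events for enlarged-radius configurations) with Lemma~\ref{lem:rescale2} (which converts an enlarged-radius crossing event under $\P_\lambda$ into a unit-radius crossing event under a rescaled intensity). The two halves of the corollary amount to carrying this out for $\w_n^*$ and $\w_n$ respectively.

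For the first identity, I would start from the equality~\eqref{eq_vacant_width_enlargeBalls}. Since $r\mapsto \V^{(r)}$ is non-increasing, the set of $\varepsilon\ge 0$ for which $\V^{(1+\varepsilon)}\in\cross^*(n)$ is a sub-interval of $[0,\infty)$, so almost surely
\begin{equation*}
\{\w_n^*\le 2a\}=\{\V^{(1+a)}\notin\cross^*(n)\}.
\end{equation*}
(The reduction to the single value $\varepsilon=a$ uses right-continuity of $\V^{(r)}$ in $r$ and the fact that for a Poisson configuration the ``boundary'' configurations where the sup is attained but not strictly exceeded have probability zero.) Then I invoke the a.s.\ planar duality ``either $\OO^{(1+a)}$ crosses the square vertically, or $\V^{(1+a)}$ crosses it horizontally'' (the same duality used to derive~\eqref{eq:duality} in the square case), which converts the right-hand side into the probability that $\OO^{(1+a)}$ contains a vertical occupied crossing of $[-n,n]^2$. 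Apply Lemma~\ref{lem:rescale2} with $r=1+a$ to rescale by $1/(1+a)$: the probability becomes that of a vertical crossing of $[-\tfrac{n}{1+a},\tfrac{n}{1+a}]^2$ under $\P_{\lambda(1+a)}$. Horizontal/vertical symmetry of the underlying Poisson process finally replaces the vertical crossing by a horizontal one, giving exactly $\P_{\lambda(1+a)}[\OO\in\cross(\tfrac{n}{1+a})]$.

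For the inequality~\eqref{eq:w_n_distrib}, the plan is more direct. In~\eqref{eq_ineq_occupied_width_enlargeBalls} choose the test value $r=\sqrt{1-a^2}$ so that $\sqrt{1-r^2}=a$; then
\begin{equation*}
\{\OO^{(\sqrt{1-a^2})}\in\cross(n+a,n-1)\}\;\subset\;\{\w_n\ge 2a\}.
\end{equation*}
Lemma~\ref{lem:rescale2} with $r=\sqrt{1-a^2}$ rescales the left-hand event to $\{\OO\in\cross(\tfrac{n+a}{\sqrt{1-a^2}},\tfrac{n-1}{\sqrt{1-a^2}})\}$ under $\P_{\lambda\sqrt{1-a^2}}$, which delivers the claimed bound. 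The strict vs.\ non-strict inequality $\w_n>2a$ vs.\ $\w_n\ge 2a$ is harmless for $a>0$ since the distribution of $\w_n$ has no atom in $(0,\infty)$; one can also absorb the difference by replacing $r=\sqrt{1-a^2}$ by $r=\sqrt{1-a^2}-\eta$ and letting $\eta\downarrow 0$.

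The only nontrivial step is the almost-sure coincidence between $\{\w_n^*\le 2a\}$ and $\{\V^{(1+a)}\notin\cross^*(n)\}$, and the a.s.\ planar duality for enlarged-radius configurations; both rest on the standard fact that, for a Poisson point process with fixed radii, the set of configurations in which a crossing path has to go exactly through a tangency point or touches the boundary of a disc has measure zero. Once these routine measure-theoretic points are disposed of, everything else is a direct substitution.
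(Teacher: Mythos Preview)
Your proof is correct and follows essentially the same route as the paper's: for~\eqref{eq:w_n^*_distrib} you pass from~\eqref{eq_vacant_width_enlargeBalls} to $\{\V^{(1+a)}\notin\cross^*(n)\}$, apply duality plus the $\pi/2$ rotation, and rescale via Lemma~\ref{lem:rescale2}; for~\eqref{eq:w_n_distrib} you specialise~\eqref{eq_ineq_occupied_width_enlargeBalls} at $r=\sqrt{1-a^2}$ and rescale. The only difference is that you spell out the measure-zero justifications (no atoms, a.s.\ duality for enlarged radii) that the paper leaves implicit.
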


\begin{proof}
	We start with~\eqref{eq:w_n^*_distrib}. Observe that, due to~\eqref{eq_vacant_width_enlargeBalls}, for any $a > 0$, 
	\begin{align*}
		\P_\lambda[\w_n^* \leq 2a ] 
		=\P_\lambda[\mathcal V^{(1+a)} \notin \cross^*(n) ] 
		=\P_\lambda[\mathcal O^{(1+a)} \in \cross(n) ] 
		=\P_{\lambda(1+a)}[\mathcal O \in \cross(\tfrac{n}{1+a}) ].
	\end{align*}
	The second equality is due to the duality property and the invariance under rotation by $\pi/2$; 
	the last equality is due to Lemma~\ref{lem:rescale2}. \medskip 
	
	We now turn to~\eqref{eq:w_n_distrib}. By~\eqref{eq_ineq_occupied_width_enlargeBalls} we have that:
	$$
		\{\w_n\geq 2a \} 
		\supset \big\{\OO^{(\sqrt{1-a^2})} \in \cross(n+a,n-1) \big\}
	$$
	Therefore, employing Lemma~\ref{lem:rescale2} we find:
	\begin{align*}
		\P_\lambda[\w_n \geq 2a ] 
		\geq \P_\lambda[\mathcal O^{(\sqrt{1 - a^2})} \in \cross(n+a,n-1) ] 
		=\P_{\lambda\sqrt{1 - a^2}}[\mathcal O \in \cross(\tfrac{n+a}{\sqrt{1 - a^2}}, \tfrac{n-1}{\sqrt{1 - a^2}}) ].
	\end{align*}
\end{proof}

\section{Proof of Theorem~\ref{thm:main_vacant} and of the lower bounds in Theorem~\ref{thm:main_occupied}}\label{sec:proofs_easy}

The proofs of  Theorem~\ref{thm:main_vacant} and of the lower bounds in Theorem~\ref{thm:main_occupied} 
are easy consequences of Corollary~\ref{cor:w_distrib}. 
As Corollary~\ref{cor:w_distrib} or Lemma~\ref{lem:rescale} make give no upper bounds on $\w_n$, 
the upper bounds in~\eqref{eq:main_critical_primal} and~\eqref{eq:main_supercritical_primal} will be harder to prove. They are postponed to the next section. \\
Recall the notation~\eqref{eq:alpha} $\alpha_n =  \tfrac{1}{n^2\pi_4(n)}$.

\begin{proof}[Proof of Theorem~\ref{thm:main_vacant}]
	Let us start with the {\bf critical} case, $\lambda = \lambda_c$. 
	Fix $\delta > 0$.
	Then, for $C >c > 0$ and $n\geq 1$, due to~\eqref{eq:w_n^*_distrib}, 
	\begin{align*}
		&1 - \P_{\lambda_c}\big[2c \alpha_n \leq \w^*_n \leq 2C \alpha_n \,\big|\, \cross^*(n) \big]\\
		&\quad  = \tfrac{1}{\P_{\lambda_c}[\cross^*(n)]}\Big(
		        \P_{\lambda_c}[ \w^*_n <2c \alpha_n]  - \P_{\lambda_c}[ \w_n  = 0]  + \P_{\lambda_c}[ \w^*_n >2C \alpha_n] \Big)\\
		&\quad  \leq  C_1
		\Big(\P_{\lambda_c(1 +c \alpha_n) }\big[\cross(\tfrac{n}{1+c \alpha_n})\big] -\P_{\lambda_c}[\cross(n)] 
		+ 1 - \P_{\lambda_c( 1 +C \alpha_n) }\big[\cross(\tfrac{n}{1+C \alpha_n})\big] \Big)
	\end{align*}
	where $C_1$ is a universal constant provided by Proposition~\ref{prop:RSW}. 
	We used Lemma~\ref{lem:rescale2} to relate the bounds on $\w_n^*$ to crossing events. 
	Applying Theorem~\ref{thm:nc_crossings}, we deduce that $c$ and $C$ may be chosen small and large enough, respectively, independently of $n$, such that the above is bounded as
	\begin{align*}
	1 - \P_{\lambda_c}\big[2c \alpha_n \leq \w^*_n \leq 2C \alpha_n \,\big|\, \cross^*(n) \big]
		&\leq \delta +  C_1 \big(\P_{\lambda_c}\big[\cross(\tfrac{n}{1+c \alpha_n})\big] -\P_{\lambda_c}[\cross(n)] \big).
	\end{align*}
	Finally, Lemma~\ref{lem:rRR} proves that the last term above may also be rendered smaller than $\delta$, provided that $c$ is small enough.
	\medskip 
	
	\noindent We now consider the {\bf subcritical} case $\lambda < \lambda_c$. As in the critical case, applying Corollary~\ref{cor:w_distrib} yields, for any $n \geq 1$ and $C >0$,
    \begin{align}
	   &\P_{\lambda}\Big[\big| \w^*_n - 2\big(\tfrac{\lambda_c}{\lambda} - 1\big)\big| > 2C\alpha_n \;\Big|\; \cross^*(n) \Big] \nonumber\\
	   & \qquad \leq \tfrac{1}{\P_\lambda[\cross^*(n)]}\Big(
	   \P_{\lambda}\Big[ \w_n^* <  2 \big(\tfrac{\lambda_c - \lambda}{\lambda} - C\alpha_n\big) \Big] +
	   \P_{\lambda}\Big[ \w_n^* >  2 \big(\tfrac{\lambda_c - \lambda}{\lambda} + C\alpha_n\big) \Big]
	   \Big)\nonumber\\
	    & \qquad \leq  \tfrac{1}{\P_\lambda[\cross^*(n)]}\left(
	   \P_{\lambda_c - C \lambda \alpha_n}\big[ \cross(n_-)) \big]+
	   1 - \P_{\lambda_c + C \lambda \alpha_n}\big[ \cross(n_+) \big] \right),
	   \label{eq:proofvacant1}
	\end{align}
	where $n_\pm = {n}/\big({1 + \tfrac{\lambda_c - \lambda}{\lambda} \pm C\alpha_n}\big)$. 
	Now, due to~\eqref{eq:nc_crossings1}, $C$ may be chosen large enough (depending on $\lambda$, but not on $n$) so that
	\begin{align*}
		   \P_{\lambda_c - C \lambda \alpha_n}\big[ \cross(n_-)) \big] \leq \delta  \quad \text{ and }\quad
		   1 - \P_{\lambda_c + C \lambda \alpha_n}\big[ \cross(n_+) \big] \leq \delta,
	\end{align*}
	 for all $n$ large enough.
	Finally, for all $n$ sufficiently large, the pre-factor in the right-hand side of~\eqref{eq:proofvacant1} is smaller than $2$. 
	Combining the above inequalities leads to the desired conclusion. 
	\medskip 
	
	\noindent Finally, let us consider the {\bf supercritical} case~\eqref{eq:main_supercritical_dual}. 
	We will treat separately the lower and upper bounds on $\w^*_n$. We start with the former.
	
	Due to~\eqref{eq:w_n^*_distrib}, for any $c > 0$ and $n\geq 1$, we have 
	\begin{align}\label{eq:mscd0}
		&\P_{\lambda}\big[\w^*_n < \tfrac{2c}{n} \;\big|\; \cross^*(n) \big]
		 = \tfrac{1}{\P_\lambda[\cross^*(n)]}\big(  \P_{\lambda(1+c/n)}[\cross(\tfrac{n}{1+c/n})]  - \P_\lambda[\cross(n)]\big) \\
		& \quad  \leq \tfrac{1}{\P_\lambda[\cross(n)^c]}\big(  \P_{\lambda(1+c/n)}[\cross(n-c,n) \setminus \cross(n)] +
		\P_{\lambda(1+c/n)}[\cross(n)] - \P_\lambda[\cross(n)] \big),\nonumber
	\end{align}
	where the inequality is obtained by adding and subtracting $\P_{\lambda(1+c/n)}[\cross(n)]$ and using the inclusion of rectangles. 
	We will bound separately the first term and the difference of the last two terms appearing in the parentheses above. We start with the latter. 
	
	Consider the measure $P$ which consists in choosing a Poission process $\eta$ of intensity $\lambda$ 
	and an independent additional Poisson point process $\tilde\eta$ of intensity $c \lambda /n$. 
	Write $\calO$ and $\tilde\calO$ for the occupied sets produced by these two processes. 
	Then 
	\begin{align}\label{eq:mscd12}
		  \frac{\P_{\lambda(1+c/n)}[\cross(n)] - \P_\lambda[\cross(n)]}{\P_\lambda[\cross(n)^c]} = 
		P \big[\calO \cup \tilde\calO \in \cross(n)\,\big|\, \calO \notin\cross(n)  \big].
	\end{align}
	
	Now, when $\calO \notin \cross(n)$, write $\calC$ for the union of the vacant clusters crossing $[-n,n]^2$ vertically. 
	Also write $\calC^{(1)} = \{x + z\,:\, x \in \calC,\, |z| \leq 1\}$ for the fattening of $\calC$ by $1$ 
	and  $\calA(\calC^{(1)})$ for the area covered by $\calC^{(1)}$.

	Since $\lambda > \lambda_c$, the conditioning on $\calO \notin \cross(n)$ is very degenerate, which renders the typical cluster $\calC$ very thin. 
	Indeed, a straightforward adaptation of the theory of \cite{CamIof02}
	induces the existence of a constant $C(\delta)> 0$ such that 
	\begin{align}\label{eq:OZarea}
		P \big[\calA(\calC^{(1)}) \geq  C(\delta) n \,\big|\, \calO \notin\cross(n)\big] < \delta, \qquad \text{ for all $n$}.
	\end{align}
	See Figure~\ref{fig:supercritOZ} (left) for an illustration. 
	Due to the independence of $\calO$ and $\tilde\calO$, the probability that a disc of $\tilde\calO$ intersects $\calC$ may be computed as 
	\begin{align*}
		P [\tilde \calO \cap \calC \neq \emptyset \,|\,  \calO] = \exp[- c \lambda\, \calA(\calC^{(1)})/n].
	\end{align*}
	Fix $c > 0$ sufficiently small that  $\exp(- c \lambda C(\delta)) > 1- \delta$, with $C(\delta)$ the constant given by~\eqref{eq:OZarea}. 
	Then we find that 
	\begin{align}
		&P \big[\calO \cup \tilde\calO \in \cross(n) \,\big|\,\calO \notin\cross(n)  \big]\nonumber\\
		&\qquad \leq P \big[ \calA(\calC^{(1)}) \geq  C(\delta) n \,\big|\, \calO \notin\cross(n)  \big]
		+	P\big[\calC \cap \tilde\calO\neq \emptyset \,\big|\, \calO \notin\cross(n),\, \calA(\calC^{(1)}) <  C(\delta) n  \big]
		\nonumber\\
		&\qquad < 2\delta.\nonumber
	\end{align}
	Combined with~\eqref{eq:mscd12}, the above yields
	\begin{align}\label{eq:mscd1}
		 \tfrac{1}{\P_\lambda[\cross(n)^c]}\big(\P_{\lambda(1+c/n)}[\cross(n)] - \P_\lambda[\cross(n)] \big)
		 \leq 2\delta.
	\end{align}
	
	We now turn to $\P_{\lambda(1+c/n)}[\cross(n-c,n) \setminus \cross(n)]$. 
	For this event to occur, there needs to exists a vacant vertical crossing of $[-n,n]^2$ 
	that visits $[-n,-n + c] \times [-n,n]$ or $[n- c,n] \times [-n,n]$. 
	Since any such crossing is essentially straight (i.e. has width $o(n)$), we find that, for $n$ large enough 
	\begin{align*}
	\P_{\lambda(1+c/n)}[\cross(n-c,n)\,|\, \cross(n)^c] \leq \delta.
	\end{align*}
	Thus,
	\begin{align}\label{eq:mscd2}
	    \P_{\lambda(1+c/n)}[\cross(n-c,n) \setminus \cross(n)]
		\leq \delta\, \P_{\lambda(1+c/n)}[\cross(n)^c]
		\leq \delta \,\P_{\lambda}[\cross(n)^c],
	\end{align}
	where the second inequality comes from the monotonicity in $\lambda$.
	The term $\delta$ in the first inequality could even be replaced $O(1/n)$ by studying more carefully the vacant cluster, 
	but this is unnecessary for our purposes. 
	
	Inserting~\eqref{eq:mscd1} and~\eqref{eq:mscd2} in~\eqref{eq:mscd0} we find that 
	\begin{align}\label{eq:mscd3}
		\P_{\lambda}\big[\w^*_n < \tfrac{2c}{n} \;\big|\; \cross^*(n) \big]
		\leq 3\delta,
	\end{align}
	which is the desired lower bound on $\w^*_n$. 
	
	\begin{figure}
	\begin{center}
	\includegraphics[width=0.4\textwidth, page = 2]{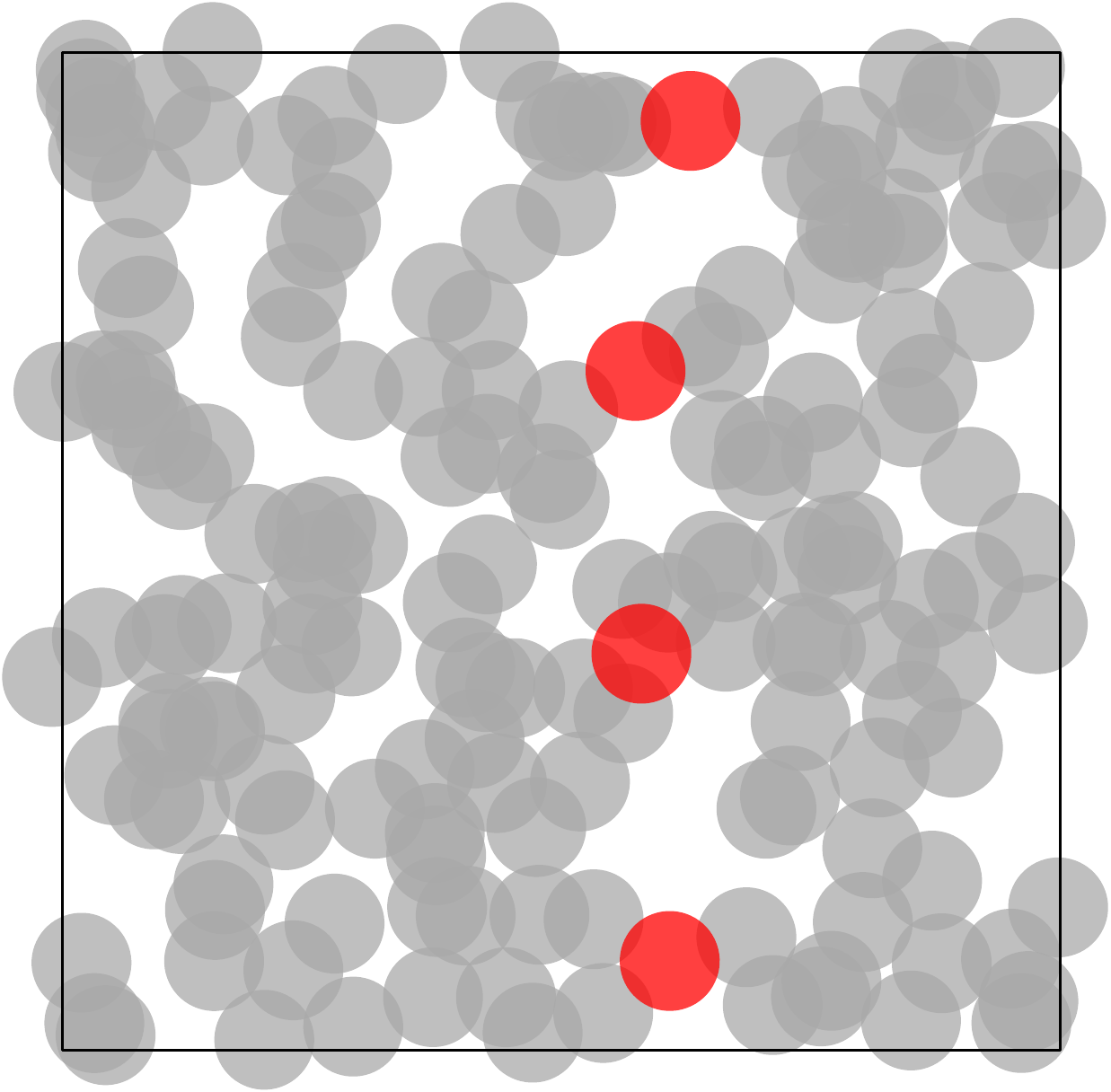}\hspace{.1\textwidth}
	\includegraphics[width=0.4\textwidth, page = 1]{fig/supercritOZ.pdf}	
	\caption{{\bf Left:} in the supercritical regime, when  $\cross(n)$ fails, the vacant cluster crossing $[-n,n]^2$ vertically is thin; 
	it typically has an area $\calA(\calC)$ of order $n$. 
	{\bf Right:} in the same situation, there exists a linear number of places where adding one disk induces an occupied horizontal crossing. The centres of these potential disks form $\Pi$.}
	\label{fig:supercritOZ}
	\end{center}
	\end{figure}

	We now turn to the upper bound on $\w^*_n$. 
	Using~\eqref{eq:w_n^*_distrib}, for $C>0$ and $n\geq 1$, 
	\begin{align*}
		\P_{\lambda}\big[\w^*_n >  \tfrac{2C}{n} \;\big|\; \cross^*(n) \big]
		 &= \tfrac{1}{\P_\lambda[\cross^*(n)]}\big(1 - \P_{\lambda(1+C/n)}[\cross(\tfrac{n}{1+C/n})]\big) \\
		 &\leq \tfrac{1}{\P_\lambda[\cross(n)^c]} \P_{\lambda(1+C/n)}[\cross(n, n-C)^c]
	\end{align*}
	The second inequality is due to the inclusion of rectangles.
	As in~\eqref{eq:mscd2},
	\begin{align*}
		 \P_{\lambda(1+C/n)}[\cross(n, n-C)^c]  \leq \P_{\lambda(1+C/n)}[\cross(n)^c] (1 + \delta). 
	\end{align*}
	for $n$ large enough. 
	Thus,
	\begin{align}\label{eq:mscd5}
		\P_{\lambda}\big[\w^*_n >  \tfrac{2C}{n} \;\big|\; \cross^*(n) \big]
		\leq(1 + \delta) \frac{\P_{\lambda(1+C/n)}[\cross(n)^c] }{\P_\lambda[\cross(n)^c]}.
	\end{align}

	Using the same notation $P$, $\calO$ and $\tilde \calO$ as above, with $\tilde \eta$ having intensity $C/n$, we find 
	\begin{align}\label{eq:mscd6}
		\P_{\lambda}[\cross(n)^c] - \P_{\lambda(1+C/n)}[\cross(n)^c] 
		= P[\calO \notin \cross(n) \text{ but } \calO \cup\tilde\calO \in \cross(n) ].
	\end{align}
	When $\calO \notin \cross(n)$, \cite{CamIof02} states that there typically exists a unique, thin vacant cluster connecting the top and bottom of $\cross(n)$, 
	and that this cluster has a linear number of pivotals. 
	Let us give a precise statement of this fact.  
	For a configuration with $\calO \notin \cross(n)$, 
	write $\Pi$ for the set of pivotal points, that is $\Pi := \{x \in \bbR^2 \,:\, \calO \cup B_1(x) \in \cross(n)\}$;
	see Figure~\ref{fig:supercritOZ} (right) for an illustration. 
	Then an adaptation of \cite{CamIof02} to the continuous setting shows the existence of a constant $c(\delta) > 0$ such that
	\begin{align*}
		\P_\lambda[\calA(\Pi) > c(\delta) n \,|\,\calO \notin \cross(n)] \geq 1 - \delta,
	\end{align*}
	where $\calA(\Pi)$ denotes the area of $\Pi$. 
	It follows that 
	\begin{align*}
		&P[\calO \cup\tilde\calO \in \cross(n) \,|\,\calO \notin \cross(n)]\\
		&\qquad \geq P[\calO \cup\tilde\calO \in \cross(n) \,|\,\calO \notin \cross(n)\text{ and } \calA(\Pi) > c(\delta) n]
		\,P[\calA(\Pi) > c(\delta) n \,|\,\calO \notin \cross(n)]\\
		&\qquad \geq (1 - e^{- c(\delta) C \lambda})(1-\delta),
	\end{align*}
	since $ 1 - e^{- c(\delta) C \lambda}$ bounds from below the probability that $\tilde\eta$ contains a pivotal point. 
	Taking $C > 0$ large enough, depending on $c(\delta)$ and $\lambda$ but not on $n$, we conclude that the above is larger than $1- 2\delta$ for all $n$ large enough. 
	Inserting this into~\eqref{eq:mscd6}, we find 
	\begin{align*}
	 	\P_{\lambda(1+C/n)}[\cross(n)^c]  \leq 2\delta \, \P_{\lambda}[\cross(n)^c].
	\end{align*}
	The above together with~\eqref{eq:mscd5} imply that 
	\begin{align}\label{eq:mscd7}
		\P_{\lambda}\big[\w^*_n >  \tfrac{2C}{n} \;\big|\; \cross^*(n) \big]
		 &\leq  (1 + \delta)2\delta.
	\end{align}	
	Finally,~\eqref{eq:mscd3} and~\eqref{eq:mscd7} yield~\eqref{eq:main_supercritical_dual}.
\end{proof}

We now turn to the results concerning the maximal width in the occupied set. 
In this section, we only prove lower bounds for $\w_n$; upper bounds are proved in the next section. 

\begin{proof}[Proof of Theorem~\ref{thm:main_occupied}; lower bounds]
	Fix $\delta > 0$. We will prove in each case that, by taking $c$ small enough, 
	$\P_{\lambda}[\w_n \leq c \,\theta(n) \,|\, \cross(n) ]$ may be rendered smaller than some explicit function of $\delta$, which tends to $0$ as $\delta \to 0$. The threshold $\theta(n)$ depends on whether $\lambda$ is smaller, equal or larger than $\lambda_c$, and is announced in Theorem~\ref{thm:main_occupied}. 
	\medskip

\noindent
	We start with the {\bf critical} case~\eqref{eq:main_critical_primal}.
	Then, for $c > 0$ and $n\geq 1$, due to~\eqref{eq:w_n_distrib}, 
	\begin{align}
		&P_{\lambda_c}\big[\w_n \leq 2\sqrt {c \alpha_n} \,\big|\, \cross(n) \big]\nonumber\\
		&\qquad= \tfrac{1}{\P_{\lambda_c}[\cross(n)]}\big(\P_{\lambda_c}[\cross(n)] - \P_{\lambda_c}[\w_n > 2\sqrt {c \alpha_n}]\big) \nonumber\\
		&\qquad\leq \tfrac{1}{\P_{\lambda_c}[\cross(n)]}\big(\P_{\lambda_c}[\cross(n)] - 
		\P_{\lambda_c\sqrt{1 - c \alpha_n}}[\cross(\tfrac{n+\sqrt {c \alpha_n}}{\sqrt{1 - c \alpha_n}}, \tfrac{n-1}{\sqrt{1 - c \alpha_n}}) ]\big)\nonumber\\
		&\qquad\leq \tfrac{1}{\P_{\lambda_c}[\cross(n)]}\big(\P_{\lambda_c}[\cross(n)] - 
		\P_{\lambda_c (1 - c \alpha_n)}[ \cross(n + 3 c n \alpha_n,  n-1) ]\big),
		\label{eq:mcp2}
	\end{align}
	provided that $n$ is large enough such that $(1 -c \alpha_n)^{-1/2} > 1 + c\alpha_n$ and $ n \sqrt {c \alpha_n} \geq 1$.
	The last inequality uses the monotonicities of $\P_\lambda(\cross(a,b))$ in $\lambda$, $a$ and $b$. 
	
	Now, due to Theorem~\ref{thm:nc_crossings}, we deduce that $c$ may be chosen such that, for all $n$, 
	\begin{align*}
		\P_{\lambda_c (1 - c \alpha_n)}[\cross(n + 3 c \alpha_n,  n-1) ]
		&\geq \P_{\lambda_c}[ \cross(n + 3 c n \alpha_n,  n-1) ] + \delta\\
		&\geq \P_{\lambda_c}[ \cross(n) ] + 2\delta,
	\end{align*}
	with the second inequality due to Lemma~\ref{lem:rRR}, $n$ large enough (depending on $c$), and we have used the a-priori estimates on the 4-arms event. 
	
	Combining the above with~\eqref{eq:mcp2}, and using the RSW inequality~\eqref{eq:RSW}, we conclude that, 
	for $c$ small enough and all $n$ larger than some threshold, 
	\begin{align}
		\P_{\lambda_c}\big[\w_n \leq 2\sqrt {c \alpha_n} \,\big|\, \cross(n) \big] \leq C_0 \delta,		\label{eq:mcp3}
	\end{align}
	where $C_0$ is a universal constant. 
\medskip

\noindent We continue with the {\bf supercritical} case~\eqref{eq:main_supercritical_primal}.
	Fix $\lambda > \lambda_c$. 
	As in the critical case, applying ~\eqref{eq:w_n_distrib} yields, for $C>0$ and all $n$ large enough, 
    \begin{align*}
	     &\P_{\lambda}\Big[\w_n \geq 2 \sqrt{1 - \big(\tfrac{\lambda_c}{\lambda} + \tfrac{C}{\lambda}\alpha_n\big)^2}\;\Big|\; \cross(n) \Big] \\
	     &\qquad\geq \frac{1}{ \P_{\lambda}[ \cross(n)] }
	      \P_{\lambda_c + C\alpha_n }\Big[\cross\Big(\tfrac{\lambda\big(n + \sqrt{1 - \big(\tfrac{\lambda_c}{\lambda} + \tfrac{C}{\lambda}\alpha_n\big)^2}\big)}{\lambda_c+ C\alpha_n},\tfrac{\lambda(n - 1)}{\lambda_c+ C\alpha_n}\Big) \Big]     \\
	     &\qquad\geq  \frac{1}{ \P_{\lambda}[ \cross(n)] }
	      \P_{\lambda_c + C\alpha_n }[ \cross(2c(\lambda)n, c(\lambda) n)],
	\end{align*}
	where $c(\lambda)$ is a constant depending only on $\lambda$.
	Theorem~\ref{thm:nc_crossings} states that $C$ may be chosen large enough so that the second term in the right-hand side of the above is larger than $1-\delta$ for all $n$. 
	The first term in the right hand-side above converges to $1$ as $n\to\infty$ due to the choice of $\lambda$ being supercritical. 
	Thus, for $C$ chosen sufficiently large and all $n$ large enough
    \begin{align*}
	 	\P_{\lambda}\Big[\w_n \geq 2 \sqrt{1 - \big(\tfrac{\lambda_c}{\lambda} + \tfrac{C}{\lambda}\alpha_n\big)^2}\;\Big|\; \cross(n) \Big] \geq 1 - 2\delta,
	\end{align*}	
	as claimed. 
	\medskip 
	
	\noindent
	Finally, let us analyse the {\bf subcritical} case~\eqref{eq:main_subcritical_primal}.
	The strategy is the same as for the supercritical case~\eqref{eq:main_supercritical_dual} for the vacant set. 
	Fix $\lambda  < \lambda_c$. 
	Due to~\eqref{eq:w_n_distrib}, for any $c > 0$ and $n\geq 1$, we have 
	\begin{align}\label{eq:mscp0}
		\P_{\lambda}\big[\w_n < 2 \sqrt{\tfrac{c}{n}} \;\big|\; \cross(n) \big]
		& = \tfrac{1}{\P_\lambda[\cross(n)]}\big( \P_\lambda[\cross(n)] - \P_{\lambda(1-\frac{c}n)}[\cross(\tfrac{n+\sqrt{c/n}}{\sqrt{1-\frac{c}n}}, \tfrac{n-1}{\sqrt{1-\frac{c}n}})]\big) \nonumber\\
		&   \leq \tfrac{1}{\P_\lambda[\cross(n)]}\Big(  
		 \P_\lambda[\cross(n)] - \P_{\lambda(1-\frac{c}n)}[\cross(n)]\nonumber\\
		&  
		+  \P_{\lambda(1-\frac{c}n)}[\cross(n)] -  \P_{\lambda(1-\frac{c}n)}\big[\cross\big(\tfrac{n+\sqrt{\frac{c}n}}{\sqrt{1-\frac{c}n}}, \tfrac{n-1}{\sqrt{1-\frac{c}n}}\big)\big] \Big).
	\end{align}
	We will bound separately the two differences appearing in the parentheses above, starting with the first one. 
	
	Consider the measure $P$ which consists in choosing a Poission process $\eta$ of intensity $\lambda (1-c/n)$ 
	and an independent additional Poisson point process $\tilde\eta$ of intensity $c \lambda /n$. 
	Write $\calO$ and $\tilde\calO$ for the occupied sets produced by these two processes. 
	Then 
	\begin{align}\label{eq:mscp12}
		\P_\lambda[\cross(n)] - \P_{\lambda(1-c/n)}[\cross(n)]= 
		P \big[\calO \notin\cross(n) \text{ but } \calO \cup \tilde\calO \in \cross(n)  \big].
	\end{align}
	
	Now, when $\calO \cup \tilde\calO \in \cross(n)$, write $\calC$ for the union of the occupied clusters crossing $[-n,n]^2$ horizontally. 
	Since $\lambda < \lambda_c$, $\calC$ is typically formed of a single, thin cluster. 
	In particular it is of linear ``volume'', both in area and in the number of discs belonging to it. 
	Indeed, a straightforward adaptation of the theory of \cite{CamIof02}
	induces the existence of a constant $C(\delta)> 0$ such that 
	\begin{align}\label{eq:mscp13}
		\P_\lambda \big[|(\eta\cup \tilde \eta) \cap \calC| \geq  C(\delta) n \,\big|\, \calO\cup  \tilde\calO  \in\cross(n)\big] < \delta, \qquad \text{ for all $n$}.
	\end{align}
	
	Now, under $P$ and conditionally on $\eta \cup \tilde \eta$, each point of $\eta \cup \tilde \eta$
	belongs to $\tilde \eta$ with a probability $ c/n$. Thus, whenever the event in~\eqref{eq:mscp13} occurs, 
	\begin{align}\label{eq:mscp14}
		P [\tilde\eta \cap \calC = \emptyset \,|\,  \eta \cup\tilde\eta] \geq \big(1-\tfrac{c}n\big)^{ C(\delta) n} \geq 1-\delta, 
	\end{align}
	provided that $c > 0$ is chosen sufficiently small. 
	Inserting~\eqref{eq:mscp12} and~\eqref{eq:mscp13} in~\eqref{eq:mscd12}, we find 
	\begin{align}\label{eq:mscp1}
		\P_{\lambda(1-c/n)}[\cross(n)] \geq (1-2\delta)\, \P_\lambda[\cross(n)].
	\end{align}
	
	We now turn to the second difference in~\eqref{eq:mscp0}.
	Assuming that $c>0$ is sufficiently small and $n$ sufficiently large, we have 
	\begin{align*}
		\P_{\lambda(1-c/n)}\Big[\cross\Big(\tfrac{n+\sqrt{c/n}}{\sqrt{1-c/n}}, \tfrac{n-1}{\sqrt{1-c/n}}\Big)\Big]
		&\geq \P_{\lambda(1-c/n)}[\cross(n + 2c, n-1)]\\
		&\geq (1 - \delta)\P_{\lambda(1-c/n)}[\cross(n + 2c, n) ]\\
		&\geq (1 - 2\delta)\P_{\lambda(1-c/n)}[\cross(n)].
	\end{align*}
	The first inequality is due to the inclusion of rectangles and some basic algebra.
	The second is obtained in the same way as~\eqref{eq:mscd2} and is valid for $n$ large enough: 
	the occupied component producing $\cross(n + 2c, n-1)$ avoids approaching the top and bottom sides of the rectangles with high probability. 
	The third is valid for $c$ small enough (independent of $n$) and is based on the same reasoning as Lemma~\ref{lem:rRR}, namely that 
	\begin{align*}
		&\P_{\lambda(1-c/n)}[\cross(n + 2(k+1)c, n) \setminus \cross(n + 2kc, n) ] \\
		&\qquad\qquad\qquad \geq c_0 \P_{\lambda(1-c/n)}[\cross(n + 2c, n) \setminus \cross(n) ]\qquad \qquad  \text{ for all $k \leq 1/c$}, 
	\end{align*}
	where $c_0 > 0$ is some constant depending only on $\lambda$, not on $n$ or $c$. 
	Finally, the above combined with~\eqref{eq:mscp1} shows that
	\begin{align}\label{eq:mscp23}
		\P_{\lambda(1-c/n)}[\cross(n)] -\P_{\lambda(1-c/n)}\Big[\cross\Big(\tfrac{n+\sqrt{c/n}}{\sqrt{1-c/n}}, \tfrac{n-1}{\sqrt{1-c/n}}\Big)\Big]
		&\leq 2\delta \P_{\lambda(1-c/n)}[\cross(n)] \nonumber \\
		&\leq 2\delta \P_{\lambda}[\cross(n)] 
	\end{align}
	where the second inequality comes form the monotonicity in $\lambda$.
	Putting~\eqref{eq:mscp1} and~\eqref{eq:mscp23} together, we conclude that
	\begin{align}\label{eq:mscp3}
	\P_{\lambda}\big[\w_n < 2 \sqrt{{c}/{n}} \;\big|\; \cross(n) \big] \leq 4\delta
	\end{align}
	as claimed. 
\end{proof}

\section{Remaining proofs}\label{sec:proofs_pivotals}

In this section we prove the upper bounds~\eqref{eq:main_subcritical_primal} and~\eqref{eq:main_critical_primal} on $\w_n$ in the subcritical and critical cases. These could not be proved using the techniques of the previous section due to the missing upper bound on $\w_n$ in~\eqref{eq:w_n_distrib}. 

The method presented here could most likely be used to obtain all the results announced in Theorems~\ref{thm:main_vacant} and~\ref{thm:main_occupied}, 
but is less elegant than that of Section~\ref{sec:proofs_easy} and would require significantly more work. 

The arguments in this section use some fine properties of critical and sub-critical percolation, which we will state explicitly, but not prove. 
Proofs are available in the literature for Bernoulli percolation on the square lattice (reference will be provided), and these may be adapted directly to our setting. 
We start with a series of definitions.

	For $x \in \bbR^2$, write $\Lambda(x) = [-2,2]^2 + x$ for the square of side-length $4$ centred at the point $x$. 
	For $n\geq1$ and points $x_1,\dots, x_k \in \mathbb R^2$, we say that $(x_1,\dots,x_k)$ is a pivotal chain for $\cross(n)$ if 
	$\calO \setminus \bigcup_{i = 1}^k \Lambda(x_i) \notin \cross(n)$, but the set is minimal for this property, in that
	for any $X \subsetneq \{1,\dots,k\}$, $\calO \setminus \bigcup_{i \in X} \Lambda(x_i) \in \cross(n)$.
	
 	Call $0$ a \textit{thin point} if there exist exactly two discs with centres in $[-4,4]^2 \setminus \Lambda(0)$ 
	and if these discs have centres in $[-3.5,-2.5]\times [-1,1]$ and $[2.5,3.5]\times [-1,1]$, respectively. 
	Call these centres $\ell(0)$ and $r(0)$, respectively. 
	See Figure~\ref{fig:thin} for an illustration. 	
	
	Notice that the property that $0$ is thin imposes no restriction on the discs inside $\Lambda(0)$, nor on those outside of  $[-4,4]^2$. 
	In particular, there may exist more than two discs intersecting $[-4,4]^2 \setminus \Lambda(0)$. 
	However, no disc centred inside $\Lambda(0)$ can intersect discs centred outside $[-4,4]^2$.
	
	We call $0$ a thin connected point if the two discs with centres in $[-4,4]^2 \setminus \Lambda(0)$ are connected by the occupied set formed of the discs with centres in $\Lambda(0)$.
	Write $\w(0)$ for the maximal width of the occupied connection between $\ell(0)$ and $r(0)$ produced by the discs in $\Lambda(0)$. 
	Set $\w(0) = 0$ if no such connection exists.
	The following lemma is a simple but key observation. 
	
	\begin{lemma}\label{lem:connect_thin}
		Fix $\lambda > 0$. There exists $c_0 = c_0(\lambda) > 0$ depending on $\lambda$ such that, for any $a \in [0,1]$,
		\begin{align*}
			\bbP_\lambda[0< \w(0) < 2a \,|\, x \text{ thin and $\eta$ outside $\Lambda(0)$}] \geq c_0 a^2.
		\end{align*}
	\end{lemma}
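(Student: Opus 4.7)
Under the given conditioning, the centres of $\eta$ inside $\Lambda(0)$ form a fresh Poisson process of rate $\lambda$, independent of the outside; moreover, by the hypothesis that no disc centred outside $[-4,4]^2$ meets $\Lambda(0)$, the width $\w(0)$ depends only on this process and on the two fixed discs at $\ell(0), r(0)$. I will exhibit an event $E$ depending only on the discs in $\Lambda(0)$, with $\P_\lambda[E]\geq c_0\,a^2$, on which exactly three intermediate discs are placed so as to form the unique chain from $\ell(0)$ to $r(0)$, with a single ``narrow'' bottleneck link of width strictly between $0$ and $2a$. The $a^2$ factor comes from a thin annular region in which the bottleneck disc must sit.

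\paragraph{Construction of $E$ and its probability.}
Write $u := (r(0)-\ell(0))/|r(0)-\ell(0)|$, $u^\perp$ a unit vector orthogonal to $u$, $D := |r(0)-\ell(0)|\in[5,\sqrt{53}]$, and $\epsilon^*(a) := 2-2\sqrt{1-a^2}$. Then $2-\epsilon^*(a) = 2\sqrt{1-a^2}$ and $\epsilon^*(a)\geq a^2$ for all $a\in[0,1]$. Pick target centres
\begin{align*}
y_1 := \ell(0)+\bigl(2-\tfrac12\epsilon^*(a)\bigr)u, \qquad y_2 := y_1 + \tfrac{D-2}{3}\,u + \delta u^\perp, \qquad y_3 := y_1 + \tfrac{2(D-2)}{3}\,u - \delta u^\perp,
\end{align*}
where $\delta>0$ is a small fixed constant ensuring (uniformly in admissible $\ell(0), r(0)$) that $|\ell(0)-y_j|, |r(0)-y_j| > 2+\rho_0$ for $j=2,3$ and $|y_1-y_3|>2+\rho_0$; this handles in particular the boundary case $D=5$. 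Let $R_1\subset\Lambda(0)$ be a thin annular sector around $\ell(0)$ of radii in $(2-\epsilon^*(a),2)$ with small fixed angular aperture about direction $u$, and $R_2, R_3\subset\Lambda(0)$ balls of a small fixed radius $\rho_0$ around $y_2, y_3$ respectively. Set
\begin{align*}
E := \bigl\{|\eta\cap\Lambda(0)| = 3\bigr\}\cap\bigl\{\text{the three points lie, one each, in } R_1, R_2, R_3\bigr\}.
\end{align*}
The standard Poisson formula yields $\P_\lambda[E] = e^{-16\lambda}\lambda^3|R_1||R_2||R_3|$; since $|R_2|, |R_3|$ are fixed positive constants and $|R_1|\geq c\,\epsilon^*(a)\geq c\,a^2$ (an annular arc of thickness $\epsilon^*(a)$ at radius close to $2$), we obtain $\P_\lambda[E]\geq c_0\,a^2$ with $c_0 = c_0(\lambda) > 0$.

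\paragraph{Verification and main obstacle.}
On $E$, label the three points $x_j\in R_j$. As $|\ell(0)-x_1|\in(2\sqrt{1-a^2},2)$, the overlap of the discs at $\ell(0)$ and $x_1$ is a lens of perpendicular width $2\sqrt{1-(|\ell(0)-x_1|/2)^2}\in(0,2a)$. The three other consecutive distances are each at most $(D-2)/3 + O(\rho_0+\delta) < 2$, so their overlap lenses have widths bounded below by a universal positive constant. Thus the chain $\ell(0)-x_1-x_2-x_3-r(0)$ exists; the constraints on $\delta, \rho_0$ enforce $|\ell(0)-x_j|>2$ for $j\in\{2,3\}$, $|r(0)-x_j|>2$ for $j\in\{1,2\}$, and $|x_1-x_3|>2$, making this chain the unique one in the occupied set. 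Removing the thin lens between the discs at $\ell(0)$ and $x_1$ disconnects $\calO$ into a component containing $\ell(0)$ and another containing $r(0)$; hence any path in $\calO$ from $\ell(0)$ to $r(0)$ must cross this lens and therefore has some point at distance at most half the lens's perpendicular width from $\V$, yielding path-width $<2a$. As the chain itself provides a path of positive width, $\w(0)\in(0,2a)$ on $E$. The main obstacle will be the uniform choice of $\delta$ and the angular aperture of $R_1$ across all admissible positions of $\ell(0), r(0)$---particularly at the boundary case $D=5$---and the elementary-geometry verification that no exotic path in $\calO$ bypasses the bottleneck lens. Everything else is routine Poisson bookkeeping.
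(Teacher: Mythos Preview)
Your construction is essentially the paper's own: place one disc in a thin annular arc about $\ell(0)$ (area $\asymp a^2$) to create the bottleneck, bridge it to $r(0)$ by a fixed number of further discs, and forbid extra discs near the bottleneck so that the lens cusps lie in $\mathcal V$; the paper's sketch is simply less explicit about the number and positions of the intermediate discs. One small slip: in your first list of separation constraints you write $|r(0)-y_j|>2+\rho_0$ for $j=2,3$, but the chain needs $|r(0)-y_3|<2$ to reach $r(0)$---the correct non-adjacencies are the ones you state in the verification paragraph ($|r(0)-x_j|>2$ only for $j\in\{1,2\}$).
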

	
	In particular, applying the above with $a =1$ shows that thin points are connected with positive probability. 
	
	\begin{figure}
	\begin{center}
	\includegraphics[width = 0.45\textwidth, page =1]{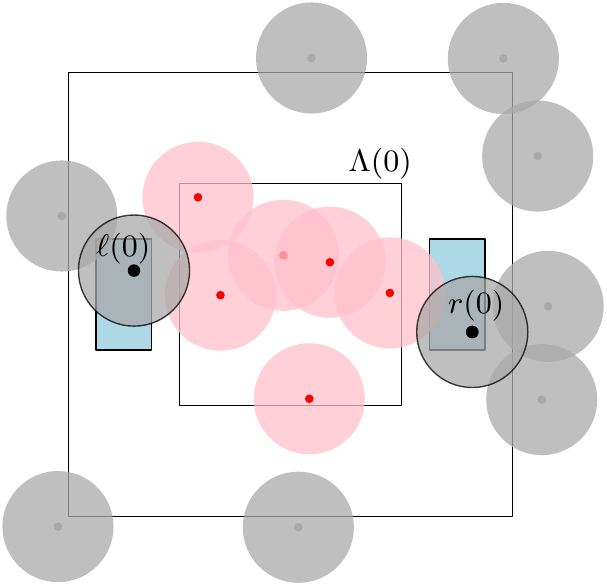}
	\includegraphics[width = 0.45\textwidth, page =2]{fig/thin.pdf}
	\caption{
	{\bf Left:} A thin connected point. 
	The only two discs with centres in $[-4,4]^2 \setminus \Lambda(0)$ are marked in bold; 
	their centres belong to the two blue regions on the side of $\Lambda(0)$. 
	They are connected by the pink discs with centres in $\Lambda(0)$.
	{\bf Right:} For $0< \w(0) < 2a$, it suffices to have no point of $\eta$ in the hashed part of $\Lambda(0)$ and
	a point of $\eta$ in the blue region, which is then connected to $r(0)$ by other discs centred in $\Lambda(0)$.
	The blue region has area of order $a^2$. }
	\label{fig:thin}
	\end{center}
	\end{figure}
	
	\begin{proof}
		The proof follows from a simple geometrical construction. 
		For $\{0< \w(0) < 2a\}$ to occur, 
		it suffices that there exists a disk in $\Lambda(0)$ at a distance between $1- a^2$ and $1$ from $\ell(0)$ which is connected to $r(0)$ by discs inside $\Lambda(0)$, 
		and that there exists no other point of $\eta$ in $\Lambda(0)$ at a distance at most $1$ from $\ell(0)$; see Figure~\ref{fig:thin}. 
		The existence of the first point occurs with a probability at least $c_0 a^2$ (for some positive constant $c_0$ that depends on $\lambda$); 
		all other conditions are satisfied with positive probability. 
	\end{proof}

	The definitions of thin and connected thin point apply by translation to any point $x \in \bbR^2$. 
	Write $\ell(x)$, $r(x)$ and $\w(x)$ for the associated notions.
	
	For $n,m ,K \geq 1$, let $\PivChain_n(m,K)$ be the event that there exists $1 \leq k\leq K$ and 
	disjoin sets $X_1,\dots,X_k \subset (8\bbZ)^2$ such that 
	\begin{itemize}
	\item $|X_i| \geq m$ for each $i$, 
	\item any $x \in \bigcup_{i=1}^k X_i$ is thin and connected,
	\item for any $x_1 \in X_1$, \dots $x_k \in X_k$, $x_1,\dots, x_k$ is a pivotal chain for $\cross(n)$. 
	\end{itemize} 
	Observe that here the points of each set $X_i$ are required to be placed on a fixed lattice, at large distance from each other. 
	Also note that $\PivChain_n(m,K) \subset \cross(n)$. 
	See Figure~\ref{fig:pivotal_chains} for an illustration of $\PivChain_n(m,K)$.

	\begin{figure}
	\begin{center}
	\includegraphics[width = 0.6\textwidth, page =1]{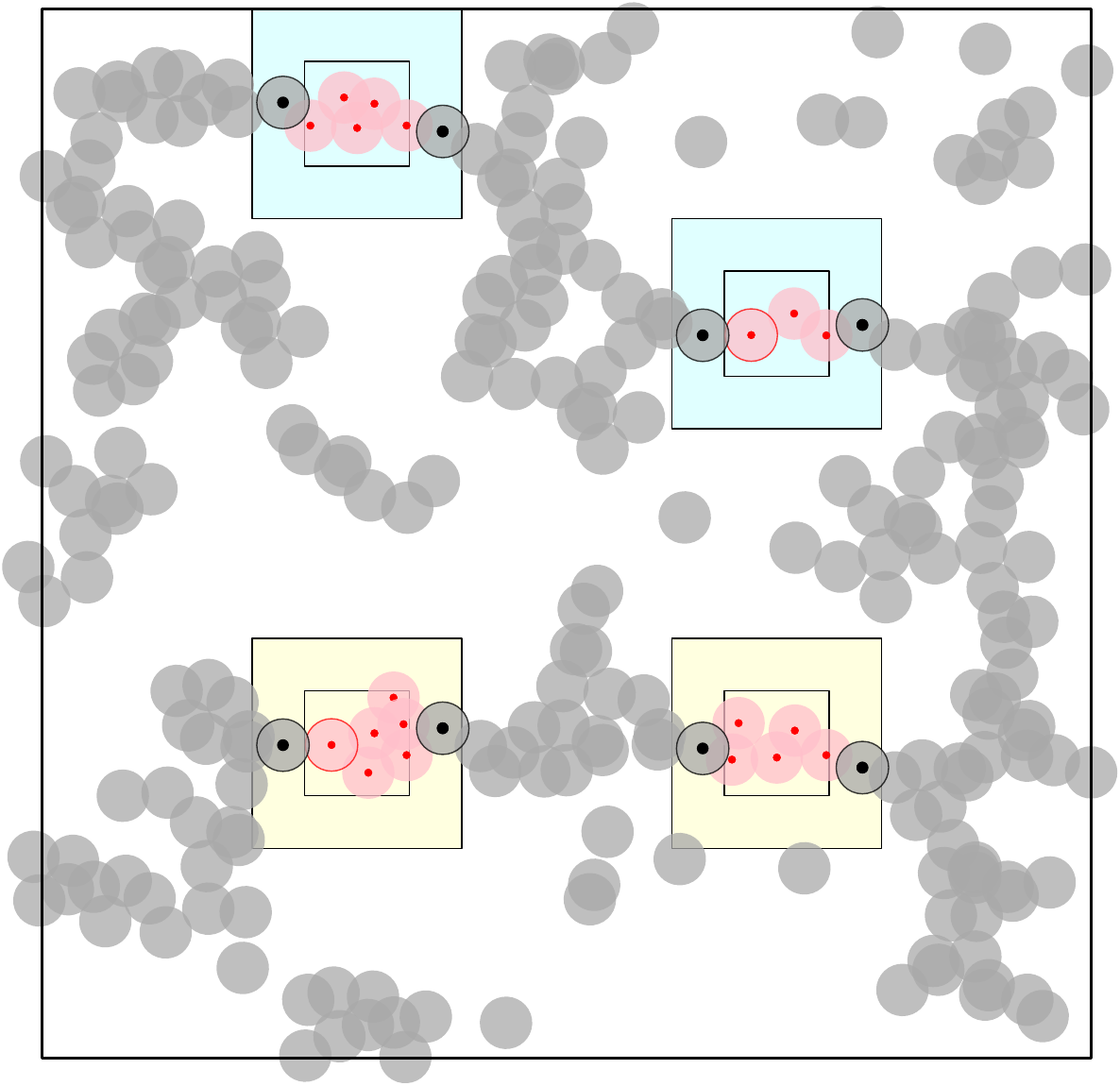}
	\caption{A situation in $\PivChain_n(m,K)$ with two sets $X_1$, $X_2$ (the corresponding boxes are blue and yellow, respectively) and two points $x$ in each set.
	All thin points are connected. The overall width of the crossing is small if there exists a point $x$ in each set $X_1$ and $X_2$ with $\w(x)$ small (see top right and bottom left boxes).}
	\label{fig:pivotal_chains}
	\end{center}
	\end{figure}
	
\subsection{Critical occupied case: upper bound}\label{sec:proofs_piv_crit}

	It is a property of critical percolation that, even when $\cross(n)$ occurs, there exists a chain of large vacant clusters that cross $[-n,n]^2$ vertically,
	with the clusters almost touching each other at many points. Moreover, these clusters are few in number and are all of large size. 
	The following lemma is a detailed such statement. 	
	
	\begin{lemma}\label{lem:piv_chain}
		For any $\delta > 0$, there exist $c > 0$ and $K \geq 1$ such that, for all $n$ large enough, 
		\begin{align*}
			\bbP\big[\PivChain_n(c n^2 \pi_4(n),K) \,\big|\, \cross(n)\big] \geq 1 - \delta. 
		\end{align*}
	\end{lemma}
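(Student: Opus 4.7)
The plan is to adapt the classical pivotal-count results for critical Bernoulli percolation (Kesten \cite{Kesten1987}, Nolin \cite{Nolin2008}) to our continuum model. There are two main inputs: (a) a first/second-moment count of thin connected pivotal points on the lattice $(8\bbZ)^2$, showing that at least $cn^2\pi_4(n)$ such points exist with high probability conditional on $\cross(n)$; and (b) a structural bound showing that the occupied horizontal crossings of $[-n,n]^2$ split into at most $K(\delta)$ disjoint strands with probability at least $1 - \delta/2$. Input (b) follows from a standard RSW computation, as the probability of seeing $j$ disjoint crossings of $[-n,n]^2$ decays to $0$ as $j \to \infty$ uniformly in $n$.

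For input (a), I would fix $x \in (8\bbZ)^2$ at distance at least $n/10$ from $\partial[-n,n]^2$. The event that $x$ is a thin connected point is determined by the configuration inside $[-4,4]^2 + x$, is independent of the configuration outside it, and occurs with positive probability (this is Lemma~\ref{lem:connect_thin} applied with $a = 1$). The event that $x$ is additionally pivotal for $\cross(n)$ — i.e.\ that excising $\Lambda(x)$ from $\calO$ destroys the crossing — reduces, by standard arm-separation arguments, to a four-arm event in the annulus between $[-4,4]^2 + x$ and $[-n,n]^2$ combined with the thin-connected configuration inside $[-4,4]^2+x$. Together with quasi-multiplicativity of $\pi_4$, this gives
\[
\P_{\lambda_c}\big[x \text{ is a thin connected pivotal for } \cross(n)\big] \asymp \pi_4(n).
\]
Summing over the $\asymp n^2$ lattice points in the bulk yields a first moment of order $n^2 \pi_4(n)$. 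The matching second moment, again using quasi-multiplicativity, produces concentration of the pivotal count around its mean with uniformly positive probability via Paley--Zygmund. Since $\P_{\lambda_c}[\cross(n)] \geq c$, this already gives positive probability conditionally on $\cross(n)$, and the boost from positive to $1 - \delta/2$ is obtained by a standard FKG-based renormalisation over constantly many overlapping sub-rectangles of $[-n,n]^2$.

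With (a) and (b) in hand, the assembly is essentially bookkeeping: group the thin connected pivotal lattice points according to which of the (at most $K$) disjoint occupied strands they lie on, and set $X_i$ equal to the set of such points on the $i$-th strand. Because the strands are disjoint, excising $\Lambda(x_i)$ cuts strand $i$ without affecting the others, so removing all of $\Lambda(x_1), \ldots, \Lambda(x_k)$ breaks every strand, while removing any proper sub-collection leaves at least one strand — and hence a crossing — intact. Thus any tuple $(x_1, \ldots, x_k)$ with $x_i \in X_i$ is a pivotal chain in the sense of Section~\ref{sec:proofs_pivotals}.

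The main technical obstacle is showing that \emph{each} strand individually carries $\geq c n^2 \pi_4(n)$ thin pivotals, rather than just the total. This requires an equidistribution argument across strands, in the spirit of the strand-separation techniques in \cite{Kesten1987,Nolin2008}: conditional on the presence of $k$ disjoint crossings, each strand behaves, at the macroscopic scale, like a stand-alone crossing whose pivotal count is governed by the same first/second-moment computation as above. Combining this with the finite-energy input provided by Lemma~\ref{lem:connect_thin} — which plays the role of the site-by-site resampling available in the lattice setting — carries out this step without essentially new difficulty beyond what is present in the Bernoulli proofs, which the paper has already flagged as adaptable.
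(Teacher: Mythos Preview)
There is a genuine gap in the assembly step, rooted in a conflation between ``pivotal for $\cross(n)$'' and ``lying on a strand''. If $[-n,n]^2$ carries two disjoint occupied crossings --- an event of probability bounded below uniformly in $n$, by RSW applied to two disjoint horizontal sub-rectangles --- then there are \emph{no} points $x$ for which excising $\Lambda(x)$ alone destroys $\cross(n)$. Hence your input~(a) cannot produce $cn^2\pi_4(n)$ single pivotals with probability close to $1$ conditionally on $\cross(n)$; what the first/second-moment computation actually controls is the number of four-arm points, and those are not globally pivotal in general. More seriously, even granting a decomposition into $k\le K$ disjoint strands $\gamma_1,\dots,\gamma_k$ and points $x_i$ with $\Lambda(x_i)$ disconnecting $\gamma_i$, removing $\bigcup_i\Lambda(x_i)$ need not destroy $\cross(n)$: maximality of the disjoint family forces every other crossing to \emph{intersect} some $\gamma_i$, but not to pass through the specific box $\Lambda(x_i)$. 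A crossing may follow $\gamma_1$ to the left of $\Lambda(x_1)$, switch to $\gamma_2$, and exit to the right of $\Lambda(x_2)$, surviving all cuts. So $(x_1,\dots,x_k)$ is generally not a pivotal chain in the required sense, and the third bullet in the definition of $\PivChain_n(m,K)$ fails.

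The paper instead invokes~\cite[Thm~7.5]{DCMV21}, whose mechanism is dual. One builds a chain of macroscopic \emph{vacant} clusters $V_0,\dots,V_k$ with $V_0$ touching the top side of $[-n,n]^2$ and $V_k$ the bottom, and takes $X_i$ to be the set of thin connected lattice points at which $V_{i-1}$ and $V_i$ come within distance $O(1)$ of each other. Removing one box from each $X_i$ merges the entire chain into a single vacant component joining top to bottom, which by duality kills \emph{every} occupied horizontal crossing at once --- precisely the global property your strand-wise cuts cannot deliver --- while omitting any one $X_i$ leaves an occupied path separating $V_{i-1}$ from $V_i$, giving minimality. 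The bound $k\le K$ is not an RSW/BK count of disjoint occupied crossings but comes from the a-priori six-arm estimate the paper mentions: a mesoscopic cluster in the chain would be surrounded by a six-arm configuration, whose probability is $o((r/R)^2)$, so with high probability every $V_i$ is macroscopic and there are only boundedly many of them. The lower bound $|X_i|\ge cn^2\pi_4(n)$ then does follow from a four-arm first/second-moment computation in the spirit of your~(a), but localised to each gap of the vacant chain rather than performed globally; arm-separation is what allows one to upgrade the raw four-arm points in each gap to thin connected points aligned on $(8\bbZ)^2$.
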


	The lemma is a consequence of the RSW theorem and of the a-priori bounds on the four and six-arm probabilities, which may be proven similarly to the Bernoulli case. 
	Lemma~\ref{lem:piv_chain} may be proved in the same way as~\cite[Thm~7.5]{DCMV21}.
	One should mention that~\cite[Thm~7.5]{DCMV21} does produce pivotal chains, but not with points belonging to the lattice $(8\bbZ)^2$, nor points that are thin. 
	To obtain thin points aligned to the lattice, one needs to use the separation of arms for the four arm event. This is a tedious but standard approach which we will not detail. 
	
\begin{proof}[Proof of Theorem~\ref{thm:main_occupied}, critical case~\eqref{eq:main_critical_primal}]
	Recall that the lower bound on $\w_n$ was proved in~\eqref{eq:mcp3}. We will focus here on the upper bound. 
	Fix $\delta >0$ and let $K \geq 1$ and $c > 0$ be the constants provided by Lemma~\ref{lem:piv_chain}.
	Let $n$ be large enough that Lemma~\ref{lem:piv_chain} applies. 
	
	When $\PivChain_n(c n^2 \pi_4(n),K)$ occurs, let $\calX= (X_1,\dots, X_k)$ be the first family of sets of $(8\bbZ)^2$ 
	that satisfies the properties of $\PivChain_n(c n^2 \pi_4(n),K)$	according to some arbitrary order. 
	The properties of $\calX$ impose that any occupied path crossing $[-n,n]^2$ horizontally crosses all $[\Lambda(x)]_{x \in X_i}$ for some $i \leq k$. 
	As such, we find 
	\begin{align*}
		\w_n \leq \max_{1 \leq i \leq k} \min_{x \in X_i} \w(x). 
	\end{align*}
	
	Observe that the thin points of $(8\bbZ)^2$ may be determined by knowing $\eta$ outside of $\bigcup_{x \in (8\bbZ)^2}\Lambda(x)$. 
	Furthermore, for any family $(X_1,\dots, X_k)$ of sets of $(8\bbZ)^2$, 
	one may determine if it satisfies the properties of $\PivChain_n(c n^2 \pi_4(n),K) $ 
	by knowing $\eta$ outside of $\bigcup_{x \in (8\bbZ)^2}\Lambda(x)$, inside all $\Lambda(x)$ for points $x\in (8\bbZ)^2$ which are not thin, 
	and by knowing which thin points of $(8\bbZ)^2$ are connected. 

	It follows that for any possible realisation $\calX_0$ of $\calX$, 
	the law of $\eta$ knowing $\calX = \calX_0$ and the process $\eta$ outside of $\Lambda(\calX_0) := \bigcup_{x \in \calX_0} \Lambda(x)$
	is simply that of a Poisson point processes on $\Lambda(\calX_0)$ conditioned on each $x \in \bigcup_i X_i$ being connected. 
	In particular, Lemma~\ref{lem:connect_thin} shows that, for any $x \in \bigcup_i X_i$,
	\begin{align}\label{eq:www}
		\bbP_{\lambda_c}\big[\w(x) < 2a \,\big|\, \calX = \calX_0 \text{ and $\eta$ outside $\Lambda(\calX_0) $}\big] \geq c_0 a^2 \qquad  \text{for all $a \in [0,1]$}.
	\end{align}
	Apply this to $a = \sqrt{C \alpha_n}$ for some large constant $C$ to deduce that, for each $i$, 
	\begin{align*}
		\bbP_{\lambda_c}\big[\min_{x \in X_i } \w(x) \geq 2\sqrt{C \alpha_n} \,\big|\, \calX = \calX_0 \text{ and $\eta$ outside $\Lambda(\calX_0) $}\big] 
		< (1 - c_0 C \alpha_n)^{c/\alpha_n} 
		< \tfrac{\delta}K. 
	\end{align*}
	The first inequality is a direct consequence of~\eqref{eq:www}, 
	the fact that the restrictions of $\eta$ to the different $[\Lambda(x)]_{x \in X_i}$ are independent
	and that $|X_i| \geq c/\alpha_n$.  
	The second inequality is ensured by taking $C$ sufficiently large (depending on $c$ and $c_0$, but not on $n$). 
	We conclude from the above that
	\begin{align}\label{eq:www2}
		\bbP_{\lambda_c}\big[\max_{1\leq i \leq k} \min_{x \in X_i } \w(x) \geq \sqrt{C \alpha_n} \,\big|\, \calX = \calX_0 \text{ and $\eta$ outside $\Lambda(\calX_0) $}\big] 
		< \delta. 
	\end{align}
	
	Apply now~\eqref{eq:www2} to all realisations $\calX$ producing $\PivChain_n(c n^2 \pi_4(n),K)$, then integrate to obtain
	\begin{align*}
		&\bbP_{\lambda_c}[\w_n \geq 2\sqrt{C \alpha_n}] \\
		\qquad&\leq \bbP_{\lambda_c}[\w_n \geq 2\sqrt{C \alpha_n} \text{ and }\PivChain_n(c n^2 \pi_4(n),K)] 
		+\bbP_{\lambda_c}[\cross(n) \setminus \PivChain_n(c n^2 \pi_4(n),K) ] \\
		\qquad&\leq 2\delta,
	\end{align*}
	with the second inequality also due to Lemma~\ref{lem:piv_chain}. 
	Together with the upper bound~\eqref{eq:mcp3} already proved, this implies~\eqref{eq:main_critical_primal}.
\end{proof}

\subsection{Subcritical occupied case: upper bound}

In subcritical percolation $\cross(n)$ is very unlikely to occur. 
Moreover, when it does, the occupied cluster crossing $[-n,n]^2$ horizontally is very thin and contains a linear number of pivotals,
of which a linear number will be connected thin points. 
A straightforward adaptation of~\cite{CamIof02} yields the following statement. 

\begin{lemma}\label{lem:piv_chain_sc}
	Fix $\lambda < \lambda_c$. There exists $c_1 = c_1(\lambda) > 0$ depending only on $\lambda$ such that, 
	\begin{align}\label{eq:piv_chain_sc}
		\bbP_{\lambda} \big[\PivChain_n(c_1 n,1)\,\big|\, \cross(n)\big] \geq 1 - e^{-c_1 n}.
	\end{align}
\end{lemma}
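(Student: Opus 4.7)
The plan is to adapt the Ornstein--Zernike (OZ) theory of Campanino--Ioffe \cite{CamIof02} to our continuum setting. In the subcritical regime, conditionally on the rare event $\cross(n)$, the unique occupied cluster crossing $[-n,n]^2$ is a thin, essentially one-dimensional object that admits a decomposition into $\Theta(n)$ irreducible pieces separated by ``cone points''. The lattice thin connected points required by $\PivChain_n(c_1 n,1)$ will then be extracted from these cone points by a local resampling argument.

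First, I would establish that, conditionally on $\cross(n)$, with probability at least $1-e^{-cn}$ there is a unique horizontal occupied crossing cluster, of linear longitudinal extent and contained in a horizontal strip of height $O(\log n)$. This follows from the BK inequality together with the exponential decay of subcritical connectivities (item (i) in the sharpness dichotomy after the RSW theorem), both of which adapt directly to our continuum setting.

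Second, I would invoke the OZ-type decomposition to produce a collection of $\Theta(n)$ cone points along the cluster, with exponential concentration of the cardinality around its mean. In the continuum, a cone point is a location $x$ such that removing the discs intersecting $\Lambda(x)$ disconnects the cluster into its left and right pieces, and only a bounded number of discs of $\eta$ participate in the local connection through $\Lambda(x)$. Each such $x$ is automatically pivotal for $\cross(n)$ as required by the definition of $\PivChain_n(\cdot,\cdot)$. To impose lattice alignment and the thin connected template, I would tile $\bbR^2$ by $8\times 8$ boxes centred at points of $(8\bbZ)^2$, and inside each box $\Lambda(x)$ that contains a cone point I would resample $\eta\cap\Lambda(x)$ conditionally on the configuration outside. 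Since the thin connected template (exactly two discs intersecting $[-4,4]^2\setminus\Lambda(x)$, one in each prescribed slab, connected through $\Lambda(x)$) has positive conditional probability given that there is any local connection through the box (by Lemma~\ref{lem:connect_thin} applied with $a=1$, together with a simple geometric construction for the two outer discs), a Bernoulli comparison yields that a positive fraction of cone-point boxes contribute a thin connected lattice point, with Chernoff/Azuma concentration giving the required $|X_1|\geq c_1 n$ with probability $1-e^{-c_1 n}$.

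The main obstacle is the transfer of the OZ machinery from Bernoulli bond percolation on $\bbZ^d$ to Poisson Boolean percolation: the renewal structure underlying the cone-point count relies on a ``surgery'' operation that has to be re-expressed in terms of local modifications of the Poisson process, and the separation-of-arms estimates must be re-derived in our setting (both are standard but tedious adaptations, already tacitly invoked in Section~\ref{sec:proofs_piv_crit}). A secondary, technical, difficulty is that the local resampling inside some box $\Lambda(x)$ can in principle spoil the pivotal property of a neighbouring candidate point; this is handled by choosing the $8\times 8$ tiles pairwise disjoint (which is automatic from the definition) and by noting that the OZ decomposition localises each pivotal event inside a single tile, so that resamplings in different tiles act independently given the ``backbone'' skeleton of the cluster.
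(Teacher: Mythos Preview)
Your proposal is correct and follows exactly the approach the paper takes: the paper does not give a proof of this lemma at all, stating only that it is ``a straightforward adaptation of~\cite{CamIof02}'' to the continuum setting. Your sketch --- uniqueness and thinness of the crossing cluster via exponential decay and BK, the OZ cone-point renewal decomposition yielding $\Theta(n)$ pivotal locations with exponential concentration, and a local conditional-probability argument (your ``resampling'') to convert a positive fraction of cone points into thin connected lattice points --- is precisely what such an adaptation entails, and is in fact more detailed than what the paper provides.
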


\begin{proof}[Proof of Theorem~\ref{thm:main_occupied} subcritical case~\eqref{eq:main_subcritical_primal}]
	Recall that the lower bound on $\w_n$ was proved in~\eqref{eq:mscp3}. We will focus here on the upper bound. 
	Fix $\lambda <\lambda_c$, $\delta >0$ and let $c_1 > 0$ be the constant provided by Lemma~\ref{lem:piv_chain_sc}.
	Let $n$ be large enough so that $e^{-c_1 n} < \delta$. 
	
	The proof is similar to that of Section~\ref{sec:proofs_piv_crit}.
	When $\PivChain_n(c_1 n,1)$ occurs, let $\calX$ be the maximal set of $(8\bbZ)^2$ that satisfies the properties of $\PivChain_n(c_1n,1)$.
	Since we are considering situations where each point of $\calX$ is pivotal, one may define such a maximal set 
	(this was not the case in Section~\ref{sec:proofs_piv_crit}, where pivotal chains were considered). 
	Then, since any occupied path crossing $[-n,n]^2$ horizontally crosses all $[\Lambda(x)]_{x \in \calX}$, we find 
	\begin{align*}
		\w_n \leq \min_{x \in \calX} \w(x). 
	\end{align*}
	
	The same argument as in Section~\ref{sec:proofs_piv_crit} shows that, for any potential realisation $\calX_0$ of $\calX$, 
	the law of $\eta$ knowing $\calX = \calX_0$ and the process $\eta$ outside of $\Lambda(\calX_0) := \bigcup_{x \in \calX_0} \Lambda(x)$
	is simply that of a Poisson point processes on $\Lambda(\calX_0)$ conditioned on each $x \in \bigcup_i X_i$ being connected. 
	In particular, Lemma~\ref{lem:connect_thin} shows that, for any $x \in \bigcup_iX_i$,
	\begin{align}\label{eq:www12}
		\bbP_{\lambda}\big[\w(x) < 2a \,\big|\, \calX = \calX_0 \text{ and $\eta$ outside $\Lambda(\calX_0) $}\big] \geq c_0 a^2 \qquad  \text{for all $a \in [0,1]$}.
	\end{align}
	Apply this to $a = \sqrt{C/n}$ for some large constant $C$ to deduce that, for any $\calX_0$
	\begin{align}
		\bbP_{\lambda}\big[\min_{x \in \calX } \w(x) \geq 2\sqrt{C/n} \,\big|\, \calX = \calX_0 \text{ and $\eta$ outside $\Lambda(\calX_0) $}\big] 
		< (1 - c_0 C/n)^{c_1 n} 
		< \delta. 
		\label{eq:www22}
	\end{align}
	The second inequality is ensured by taking $C$ sufficiently large (depending on $c_0$ and $c_1$, but not on $n$). 
	
	Apply now~\eqref{eq:www22} to all realisations $\calX$ producing $\PivChain_n(c_1 n,1) $, then integrate to obtain
	\begin{align*}
		\bbP_{\lambda}[\w_n \geq 2\sqrt{C /n}] 
		&\leq \bbP_{\lambda}[\w_n \geq 2\sqrt{C/n} \text{ and }\PivChain_n(c_1 n,1) ] 
		+\bbP_{\lambda}[\cross(n) \setminus \PivChain_n(c_1 n,1) ] \\
		&\leq 2\delta\bbP_{\lambda}[\cross(n)],
	\end{align*}
	with the second inequality also due to Lemma~\ref{lem:piv_chain_sc} and the choice of $n$ large enough. 
	Together with the upper bound~\eqref{eq:mscp3} already proved, this implies~\eqref{eq:main_subcritical_primal}.
\end{proof}

\section{Questions}\label{sec:open_question}

In closing, let us discuss some related open questions. 

The most natural question is probably to extend the results beyond non-compactly supported radii distributions. 
The results surely fail when the tails of the distribution of radii are too heavy, but for quickly decaying distributions they should remain valid. 

The second question that comes to mind, is whether this analysis may be performed for randomly placed sets of any shape, rather than discs. 
For such sets, Corollary~\ref{cor:w_distrib}, which is the cornerstone of the proofs of Section~\ref{sec:proofs_easy}, ceases to hold. 
The method used in Section~\ref{sec:proofs_pivotals} and based on the study of pivotal points appears more robust, and may be used for general shapes. 
It would therefore be interesting to adapt this method to prove all results. 
Some problems may arise for lower bounds on $\w_n$ and $\w_n^*$, 
as the points where these minimal widths are reached are not always pivotals. 

A third question is related to the difference between the results for the occupied and vacant set. 
In the critical case, $\w_n$ is of the same order as $\sqrt{\w_n^*}$; the same phenomenon happens when comparing~\eqref{eq:main_subcritical_dual}
to~\eqref{eq:main_supercritical_primal} 
and~\eqref{eq:main_supercritical_dual} to~\eqref{eq:main_subcritical_primal}.
This difference appears to be due to the round shape of the discs, but a quick computation based on the method of Section~\ref{sec:proofs_pivotals} seems to suggest that the same is true when discs are replaced with squares. 
Is this phenomenon more general? 

Finally, note that the supercritical case of Theorem~\ref{thm:main_occupied} only offers a lower bound on $\w_n$. 
Indeed, the upper bound 
\begin{align*}
	\P_{\lambda}\Big[\w_n \leq 2 \sqrt{1 - \big(\tfrac{\lambda_c}{\lambda} - \tfrac{C}{\lambda n^2\pi_4(n)}\big)^2}\;\Big|\; \cross(n) \Big] &\geq  1-\delta,
\end{align*}
fails for $\lambda$ sufficiently large, due to the phenomenon explained in Remark~\ref{rem:supercritical_occupied}.
Still, one may expect it to hold for $\lambda$ sufficiently close to $\lambda_c$. Is this the case?

We close with a thought: this study appears to be specific to continuum percolation, with no apparent correspondence in the discrete. Is there one?

\bibliographystyle{amsplain}
\bibliography{bib}

\end{document}